\providecommand{\keywords}[1]{\textbf{\textit{keywords:}} #1}
\theoremstyle{plain}% Theorem-like structures provided by amsthm.sty
\newtheorem{theorem}{Theorem}[section]
\newtheorem{lemma}[theorem]{Lemma}
\newtheorem{corollary}[theorem]{Corollary}
\theoremstyle{definition}
\newtheorem{example}[theorem]{Example}
\theoremstyle{remark}
\newtheorem{remark}{Remark}
\begin{document}

\title{A finite difference approximation of a two dimensional
time fractional advection-dispersion problem}
% \author{Author's Name}

% \author[1]{Alice Smith}\providecommand{\keywords}[1]{\textbf{\textit{Index terms---}} #1}
% \author[2]{Bob Jones}
% \affil[1]{Department of Mathematics, University X}
% \affil[2]{Department of Biology, University Y}

% \author[1]{Daniel Kopta}
% \author[1]{Konstantin Shkurko}
% \author[1,2,3]{Josef Spjut}
% \author[1]{Erik Brunvand}
% \author[1]{Al Davis}
% \affil[1]{School of Computing, University of Utah, Salt Lake City, UT, USA\\
%        \{redacted\}@email.edu}
% \affil[2]{NVIDIA Research}
% \affil[3]{Department of Engineering, Harvey Mudd College, Claremont, CA, USA}

\author[1]{Carlos E. Mej\'ia \thanks{cemejia@unal.edu.co}}
\author[2]{Alejandro Piedrahita H. \thanks{alejandro.piedrahita@udea.edu.co}}
% \author[1,2]{Author E\thanks{E.E@university.edu}}
\affil[1]{Escuela de Matem\'aticas, Universidad Nacional de Colombia, Medell\'in, Colombia}
\affil[2]{Instituto de Matem\'aticas, Universidad de Antioquia, Medell\'in, Colombia}

\date{}

\maketitle

\begin{abstract}
The main purpose of this paper is the construction and analysis of an implicit finite difference scheme for the numerical solution of a two dimensional time-fractional advection-dispersion equation with variable coefficients. The dispersion term is in nondivergence form and the fractional derivative is taken in the sense of Caputo. Equations of this sort are potentially useful as models of contaminant transport in groundwater. Provided some mild assumptions are satisfied, proofs of consistency, stability and convergence are obtained. Furthermore, we offer a general but simple framework for the matrices required in computations and everything is tested by a well selected set of numerical experiments.
\end{abstract}

\keywords{
Caputo fractional derivative, two dimensional time fractional advection-dispersion problem, finite difference approximation, stability, convergence.
}

%\chapter{First Chapter}

\section{Introduction}

Fractional derivatives are associated with memory and  hereditary properties of materials and processes. They are known since the seventeenth century but only recently have become an important subject of applied mathematics. They might be applied on time and/or space variables and are suitable for a variety of topics, for instance, the behavior of viscoelastic materials (\cite{book:diethelm}) and the anomalous diffusion of a contaminant in porous media (\cite{Fomin2010}). For these and other uses of fractional derivatives the reader is invited to consult \cite{book:podlubny,OldhamS2006,book:kilbasST2006}.

There are a variety of fractional derivatives, i.e. Caputo, Riemann-Liouville, Grünwald-Letnikov and many others. Moreover, the fractional derivatives  can be single-term or multi-term, according to the number of differentiation orders which can be real or complex numbers. Our interest is on the modeling of transport phenomena in porous media through a two dimensional time-fractional advection-dispersion equation with variable coefficients in which the diffusion term is given in nondivergence form and the differentiation order is a real number between $0$ and $1$.

Many authors have proposed numerical solutions for time-fractional differential equations. For instance, \cite{ZhuangLiu2007} introduces a two dimensional single term time-fractional diffusion equation with variable coefficients and diffusion term in nondivergence form. The present work owes several ideas to this paper. Some authors solve one dimensional time-fractional differential equations with diffusion term in nondivergence form. We mention \cite{WangRen2018}, in which the interest is in the direct problem and \cite{MP2017} which solves a one dimensional time-fractional diffusion equation as a tool in the  process of solving an inverse problem.

Among the authors who face two dimensional time-fractional differential equations, we mention \cite{Balasim-Ali2017}, in which the problem is similar to ours but the coefficients are constant and \cite{EcheverryMejia2018}, which deals with a two dimensional inverse source problem and introduces a particular case of our numerical scheme for the necessary solution of the direct problem.

In this paper, for the implicit approximation of the Caputo fractional derivative we implement the known scheme very well described in \cite{LinXu2007}. This scheme appears elsewhere, for instance, in \cite{ZhuangLiu2007}. For the advection and dispersion terms we implement standard central finite difference schemes. 

The rest of the paper is divided in three sections. Section \ref{2} defines the equation and the numerical methods. The next section contains the consistency, stability and convergence statements along with their proofs. The numerical experiments and final remarks are presented in Section \ref{4} .

\section{The problem and the numerical method\label{2}}
The prediction of the environmental consequences of groundwater contamination
is an important goal for researchers. Our interest is to help in this
prediction through a numerical approximation of a mathematical model based on
a partial differential equation known as an advection-dispersion equation.
Our equation has variable coefficients and a time
fractional derivative rather than the classical time derivative. Other
features of our model are: It considers the contaminant transport through a
two dimensional porous medium with variable advection and dispersion function
coefficients given by two components each. Moreover, the diffusion terms are in nondivergence form. 
For this matter we follow references \cite{ZhuangLiu2007,qiao2017rbf,zhao2017numerical}. All of them show that 
nondivergence diffusion terms are worth and with Caputo time-fractional derivatives provide useful 
models of anomalous diffusion.

% Moreover, the diffusion terms are in 
% nondivergence form. For this matter we follow references like \cite{Liu_Z_A_T_B2007}. 
% All of them show that nondivergence diffusion terms are worth and with Caputo time-fractional 
% derivatives provide useful non-local models of anomalous diffusion.

\subsection{The initial-boundary value problem}\label{conditions}
We consider the two-dimensional initial-boundary value problem
\begin{linenomath*}
	\begin{multline}\label{eq:tfade}
	% \begin{split}
	u^{(\alpha)}_t(x,y,t) + a(x,y,t)u_x(x,y,t) + b(x,y,t)u_y(x,y,t) \\ = c(x,y,t) u_{xx}(x,y,t) + d(x,y,t) u_{yy}(x,y,t) + f(x,y,t)
	% \end{split}
	\end{multline}
\end{linenomath*}
with initial condition
\begin{linenomath*}
	\begin{equation}\label{cond:initial}
	u(x,y,0) = \psi(x,y), \qquad (x,y)\in\Omega := (x_L,x_R)\times (y_L,y_R)\subset \mathbb{R}^2,
	\end{equation}
\end{linenomath*}
and Dirichlet boundary condition
\begin{linenomath*}
	\begin{equation}\label{cond:boundary}
	u(x,y,t) = 0, \qquad  (x,y)\in\partial\Omega\times (0,T],
	\end{equation}
\end{linenomath*}
where: %$\Omega = (x_L,x_R)\times (y_L,y_R)\subset \mathbb{R}^2$, $0<t\le T$ and

\begin{enumerate}
	\item $u\left(  x,y,t\right)  $ is the contaminant concentration.
	
	\item $c$ is the longitudinal dispersion variable coefficient.
	
	\item $d$ is the transversal dispersion variable coefficient.
	
	\item $a$ and $b$ are the longitudinal and transversal advection coefficients
	respectively. They are basically the seepage or average pore water velocity
	and if one of the directions is predominant, only one of the advection
	function coefficients is nonzero.
	
	\item $f$ is a known source or sink term.
	
	\item $u_{t}^{\left(  \alpha\right)  }$ is the Caputo fractional derivative of
	order $\alpha$ given by%
	\begin{linenomath*}
		\begin{equation}
		u_{t}^{(\alpha)}(x,y,t):=%
		\begin{cases}
		{\displaystyle\frac{1}{\Gamma(1-\alpha)}\int_{0}^{t}\frac{u_{t}(x,y,\xi
				)}{(t-\xi)^{\alpha}}\,d\xi,} & 0<\alpha<1,\\[3mm]%
		u_{t}(x,y,t), & \alpha=1,
		\end{cases}
		\label{caputo}%
		\end{equation}
	\end{linenomath*}
	
	\item The variable coefficient functions $a,b,c,d$ satisfy the following
	uniform bounds: There are two positive constants $A$ and $D$ so that
	\begin{linenomath*}
		\begin{equation}\label{bounds}
		\begin{array}
		[c]{c}%
		0\leq a(x,y,t) \leq A,\\[1mm]%
		0\leq b(x,y,t) \leq A,
		\end{array}
		\qquad\mbox{and}\qquad%
		\begin{array}
		[c]{c}%
		0<D\leq c(x,y,t),\\[1mm]%
		0<D\leq d(x,y,t).
		\end{array}
		\end{equation}
	\end{linenomath*}
\end{enumerate}

The next subsection deals with the proposed finite difference approximation. 

\subsection{The numerical scheme}

Let the mesh points $x_{i}=x_{L}+i\Delta x$, $0\leq i\leq N_{x}$, $y_{j}%
=y_{L}+j\Delta y$, $0\leq j\leq N_{y}$ and $t_{k}=k\Delta t$, $0\leq k\leq
N_{t}$, where $\Delta x=(x_{R}-x_{L})/N_{x}$ and $\Delta y=(y_{R}-y_{L}%
)/N_{y}$ are the spatial grid sizes in the $x$- and $y$-direction,
respectively, and $\Delta t=T/N_{t}$ is the time step size. The values of the
functions $u,a,b,c,d$ and $f$ at the grid points are denoted by $u_{i,j}%
^{k}=u(x_{i},y_{j},t_{k})$, $a_{i,j}^{k}=a(x_{i},y_{j},t_{k})$, $b_{i,j}%
^{k}=b(x_{i},y_{j},t_{k})$, $c_{i,j}^{k}=c(x_{i},y_{j},t_{k})$, $d_{i,j}%
^{k}=d(x_{i},y_{j},t_{k})$ and $f_{i,j}^{k}=f(x_{i},y_{j},t_{k})$,
respectively. The initial condition is set as $u_{i,j}^{0}=\psi_{i,j}%
=\psi(x_{i},y_{j})$. The Dirichlet boundary condition at $x=x_{L}$ is set as
$u_{0,j}^{k}=0$ and similarly on the other three sides of the boundary.
%The second inequality can be proved in the same way.

The Caputo fractional derivative at time $t_{k+1}$ is approximated by
\begin{linenomath*}
	\begin{equation}
	u_{t}^{(\alpha)}(x_{i},y_{j},t_{k+1})=\sigma_{\alpha,\Delta t}\sum_{s=0}%
	^{k}\omega_{s}^{(\alpha)}\left(  u_{i,j}^{k-s+1}-u_{i,j}^{k-s}\right)
	+O\left((\Delta t)^{2-\alpha}\right), \label{eq:caputo-aprox}%
	\end{equation}
\end{linenomath*}
for $k=0,\hdots,N_{t}-1$, where $\sigma_{\alpha,\Delta t}=\dfrac{1}{(\Delta t)^{\alpha
	}\Gamma(2-\alpha)}$ and $\omega_{s}^{(\alpha)}=(s+1)^{1-\alpha}-s^{1-\alpha}$
for $s=0,\hdots,N_{t}$, as described in \cite{LinXu2007}.

Derivatives with respect to space coordinates
are approximated by central difference formulae. Let
$v_{i,j}^{k}$ be the numerical approximation to $u_{i,j}^{k}$. The discrete
version of \eqref{eq:tfade} is the implicit finite difference scheme (\textbf{IFDS}) 
given by

\begin{linenomath*}
	\begin{multline}\label{ifds}
	\sigma_{\alpha,\Delta t}\sum_{s=0}^{k}\omega_{s}^{(\alpha)}\left(
	v_{i,j}^{k-s+1}-v_{i,j}^{k-s}\right)  +a_{i,j}^{k+1}\,\frac{v_{i+1,j}%
		^{k+1}-v_{i-1,j}^{k+1}}{2\Delta x}+b_{i,j}^{k+1}\,\frac{v_{i,j+1}%
		^{k+1}-v_{i,j-1}^{k+1}}{2\Delta y}\\
	=c_{i,j}^{k+1}\,\frac{v_{i+1,j}^{k+1}-2v_{i,j}^{k+1}+v_{i-1,j}^{k+1}}{(\Delta
		x)^{2}}+d_{i,j}^{k+1}\,\frac{v_{i,j+1}^{k+1}-2v_{i,j}^{k+1}+v_{i,j-1}^{k+1}%
	}{(\Delta y)^{2}}+f_{i,j}^{k+1},
	\end{multline}
\end{linenomath*}
for $i=1,\hdots,N_{x}-1$, $j=1,\hdots,N_{y}-1$ and $k=0,\hdots,N_{t}-1$. 

\subsection{Consistency} 

In order to prove consistency of scheme (\textbf{IFDS}), it is convenient to denote \eqref{eq:tfade} by 
\begin{linenomath*}
	\[
	S(u)=S\left( \partial _{t},\partial _{x},\partial _{y},\partial
	_{xx},\partial _{yy}\right) u=f(x,y,t),
	\]%
\end{linenomath*}
where 
\begin{linenomath*}
	\begin{eqnarray*}
		S\left( u\right)  &=&u_{t}^{(\alpha
			)}(x,y,t)+a(x,y,t)u_{x}(x,y,t)+b(x,y,t)u_{y}(x,y,t) \\
		&&-c(x,y,t)u_{xx}(x,y,t)-d(x,y,t)u_{yy}(x,y,t)
	\end{eqnarray*}
\end{linenomath*}

Likewise, we establish the following alternative notation for scheme (\textbf{IFDS}) 
\begin{linenomath*}
	\[
	S_{\Delta}(v) = S_{\Delta t,\Delta x, \Delta y }(v_{i,j}^{k+1}),
	\]%
\end{linenomath*}
where 
\begin{linenomath*}
	\begin{multline*}
	S_{\Delta}(v) = \sigma_{\alpha,\Delta t}\sum_{s=0}^{k}\omega_{s}^{(\alpha)}\left(
	v_{i,j}^{k-s+1}-v_{i,j}^{k-s}\right)  +a_{i,j}^{k+1}\,\frac{v_{i+1,j}%
		^{k+1}-v_{i-1,j}^{k+1}}{2\Delta x}+b_{i,j}^{k+1}\,\frac{v_{i,j+1}%
		^{k+1}-v_{i,j-1}^{k+1}}{2\Delta y}\\
	-c_{i,j}^{k+1}\,\frac{v_{i+1,j}^{k+1}-2v_{i,j}^{k+1}+v_{i-1,j}^{k+1}}{(\Delta
		x)^{2}}-d_{i,j}^{k+1}\,\frac{v_{i,j+1}^{k+1}-2v_{i,j}^{k+1}+v_{i,j-1}^{k+1}%
	}{(\Delta y)^{2}}.
	\end{multline*}
\end{linenomath*}
for $i=1,\hdots,N_{x}-1$, $j=1,\hdots,N_{y}-1$ and $k=0,\hdots,N_{t}-1$. It is known that 
if $u$ is a smooth function, then at interior points of its domain the following equalities hold:%
\begin{linenomath*}
	\begin{eqnarray*}
		u_{t}^{\left( \alpha \right) }(x_i,y_j,t_k) - \sigma _{\alpha ,\Delta
			t}\sum_{s=0}^{k}\omega _{s}^{(\alpha )}\left(
		u_{i,j}^{k-s+1}-u_{i,j}^{k-s}\right)  &=&O\left((\Delta t)^{2-\alpha}\right) \\
		u_{x}(x_i,y_j,t_k) - \frac{u_{i+1,j}^{k+1}-u_{i-1,j}^{k+1}}{2\Delta x} &=&O\left( \left(
		\Delta x\right) ^{2}\right) \\
		u_{y}(x_i,y_j,t_k) - \,\frac{u_{i,j+1}^{k+1}-u_{i,j-1}^{k+1}}{2\Delta y} &=&O\left( \left(
		\Delta y\right) ^{2}\right) \\
		u_{xx}(x_i,y_j,t_k) - \frac{u_{i+1,j}^{k+1}-2u_{i,j}^{k+1}+u_{i-1,j}^{k+1}}{(\Delta x)^{2}}
		&=&O\left( \left( \Delta x\right) ^{2}\right)  \\
		u_{yy}(x_i,y_j,t_k) - \frac{u_{i,j+1}^{k+1}-2u_{i,j}^{k+1}+u_{i,j-1}^{k+1}}{(\Delta y)^{2}}
		&=&O\left( \left( \Delta y\right) ^{2}\right) 
	\end{eqnarray*}
\end{linenomath*}

Let us denote the addition of all right hand sides above by 
\begin{linenomath*}
	\begin{equation}
	O\left(\Delta\right) = O\left(  (\Delta t)^{2-\alpha},\left(  \Delta x\right)  ^{2},\left(  \Delta y\right)
	^{2}\right). \label{bigodel} 
	\end{equation}
\end{linenomath*}

Thus,
\begin{linenomath*}
	\begin{equation}
	S\left(  u\right)  -S_{\Delta}\left(  u\right)  =O\left(  (\Delta t)^{2-\alpha},\left(
	\Delta x\right)  ^{2},\left(  \Delta y\right)  ^{2}\right)  \label{bigototal}%
	\end{equation}
\end{linenomath*}
and we have proved

\begin{lemma}
	The finite difference scheme (\textbf{IFDS}) is consistent with the 
	partial differential equation \eqref{eq:tfade}. 
\end{lemma}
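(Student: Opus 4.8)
The plan is to verify consistency in the standard sense: for a sufficiently smooth solution $u$ of \eqref{eq:tfade}, the local truncation error obtained by substituting the exact nodal values $u_{i,j}^{k}$ into the discrete operator, namely $S(u)-S_{\Delta}(u)$, must tend to zero as $\Delta t,\Delta x,\Delta y\to 0$. Since the excerpt has already recorded the pointwise truncation estimates for each of the five constituent terms, the proof reduces to assembling them; this is in fact exactly the content summarized in \eqref{bigototal}, so the task is to justify that identity carefully and then read off the consistency conclusion from it.

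First I would substitute $u$ for $v$ in $S_{\Delta}$ and subtract the two operators term by term. The temporal term contributes the $L1$-approximation error $O\!\left((\Delta t)^{2-\alpha}\right)$ of the Caputo derivative, which is the estimate established in \cite{LinXu2007}; the four spatial terms contribute the central-difference errors $O\!\left((\Delta x)^{2}\right)$ (from $u_{x}$ and $u_{xx}$) and $O\!\left((\Delta y)^{2}\right)$ (from $u_{y}$ and $u_{yy}$), all valid because $u$ is assumed smooth enough for the underlying Taylor expansions.

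Next, each spatial error is multiplied by the corresponding variable coefficient $a$, $b$, $c$ or $d$. By the uniform bounds \eqref{bounds} the advection coefficients satisfy $0\le a,b\le A$, and the dispersion coefficients, being continuous on the compact set $\overline{\Omega}\times[0,T]$, are likewise bounded above; consequently multiplication by a coefficient does not change the order of the error, and each product remains $O\!\left((\Delta x)^{2}\right)$ or $O\!\left((\Delta y)^{2}\right)$. Adding the five contributions then yields \eqref{bigototal}, that is, $S(u)-S_{\Delta}(u)=O(\Delta)$ with $O(\Delta)$ as defined in \eqref{bigodel}.

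Finally, I would observe that because $0<\alpha<1$ we have $2-\alpha>0$, so all three exponents appearing in $O(\Delta)$ are strictly positive; hence $S(u)-S_{\Delta}(u)\to 0$ as $(\Delta t,\Delta x,\Delta y)\to(0,0,0)$, which is precisely consistency. I do not anticipate a genuine obstacle, since the heavy lifting is contained in the individual truncation estimates that have been quoted. The only points requiring care are ensuring the smoothness hypotheses on $u$ are strong enough for all five expansions to hold simultaneously, and relying on the cited $O\!\left((\Delta t)^{2-\alpha}\right)$ bound for the $L1$/Caputo discretization rather than re-deriving it here.
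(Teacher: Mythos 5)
Your proposal is correct and follows essentially the same route as the paper: both substitute the exact solution into the discrete operator, invoke the five quoted truncation estimates (the $L1$/Caputo bound $O\left((\Delta t)^{2-\alpha}\right)$ and the second-order central-difference bounds), and sum them to obtain \eqref{bigototal}, from which consistency is immediate since all exponents are positive. Your explicit remarks on the boundedness of the variable coefficients and on the smoothness hypotheses are minor refinements that the paper leaves implicit, not a different argument.
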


Other way to write (\ref{bigototal}) is%
\begin{linenomath*}
	\begin{equation}
	f_{i,j}^{k+1}-S_{\Delta}\left(  u\right)  =O\left(  (\Delta t)^{2-\alpha},\left(  \Delta
	x\right)  ^{2},\left(  \Delta y\right)  ^{2}\right)  .\label{fs1}%
	\end{equation}
\end{linenomath*}

By setting
\begin{linenomath*}
	\begin{equation}
	\mu_{1}=\frac{(\Delta t)^{\alpha}}{2\Delta x},\quad\mu_{2}=\frac{(\Delta
		t)^{\alpha}}{2\Delta y},\quad\mu_{3}=\frac{(\Delta t)^{\alpha}}{(\Delta
		x)^{2}},\quad\mu_{4}=\frac{(\Delta t)^{\alpha}}{(\Delta y)^{2}},\quad
	\tau=\frac{1}{\sigma_{\alpha,\Delta t}}=(\Delta t)^{\alpha}\Gamma(2-\alpha),\label{mu}%
	\end{equation}
\end{linenomath*}
and
\begin{linenomath*}
	\begin{equation}%
	\begin{split}
	p_{i,j}^{k} &  =\Gamma(2-\alpha)\left[  \mu_{3}c_{i,j}^{k}-\mu_{1}a_{i,j}%
	^{k}\right]  ,\quad q_{i,j}^{k}=\Gamma(2-\alpha)\left[  \mu_{3}c_{i,j}%
	^{k}+\mu_{1}a_{i,j}^{k}\right]  ,\\
	r_{i,j}^{k} &  =\Gamma(2-\alpha)\left[  \mu_{4}d_{i,j}^{k}-\mu_{2}b_{i,j}%
	^{k}\right]  ,\quad h_{i,j}^{k}=\Gamma(2-\alpha)\left[  \mu_{4}d_{i,j}^{k}%
	+\mu_{2}b_{i,j}^{k}\right]  ,\\
	e_{i,j}^{k} &  =1+p_{i,j}^{k}+q_{i,j}^{k}+r_{i,j}^{k}+h_{i,j}^{k}%
	=1+2\,\Gamma(2-\alpha)\left[  \mu_{3}c_{i,j}^{k}+\mu_{4}d_{i,j}^{k}\right]  ,
	\end{split}
	\label{coe}%
	\end{equation}
\end{linenomath*}
we split scheme (\textbf{IFDS}) in two stages:
\begin{enumerate}
	\item
	For $k=0,$ it is
	\begin{linenomath*}
		\begin{equation}%
		\begin{split}
		-\left(  p_{i,j}^{1}v_{i+1,j}^{1}+q_{i,j}^{1}v_{i-1,j}^{1}\right)
		+e_{i,j}^{1}v_{i,j}^{1} &  -\left(  r_{i,j}^{1}v_{i,j+1}^{1}+h_{i,j}%
		^{1}v_{i,j-1}^{1}\right)  \\
		&  =v_{i,j}^{0}+\tau f_{i,j}^{1}%
		\end{split}
		\label{scheme0}%
		\end{equation}
	\end{linenomath*}
	for $i=1,\hdots,N_{x}-1$ and $j=1,\hdots,N_{y}-1$
	\item 
	For $k=1,\hdots,N_{t}%
	-1$, the scheme is
	\begin{linenomath*}
		\begin{equation}%
		\begin{split}
		-\left(  p_{i,j}^{k+1}v_{i+1,j}^{k+1}+q_{i,j}^{k+1}v_{i-1,j}^{k+1}\right)   &
		+e_{i,j}^{k+1}v_{i,j}^{k+1}-\left(  r_{i,j}^{k+1}v_{i,j+1}^{k+1}+h_{i,j}%
		^{k+1}v_{i,j-1}^{k+1}\right)  \\
		&  =v_{i,j}^{k}-\sum_{s=1}^{k}\omega_{s}^{(\alpha)}\left(  v_{i,j}%
		^{k-s+1}-v_{i,j}^{k-s}\right)  +\tau f_{i,j}^{k+1}%
		\end{split}
		\label{schemek}%
		\end{equation}
	\end{linenomath*}
	where $i=1,\hdots,N_{x}-1$ and $j=1,\hdots,N_{y}-1$.
\end{enumerate}

The next lemma provides the main features of the quadrature weights $\omega_{s}^{(\alpha)}.$

\begin{lemma}
	The quadrature weights $\omega_{s}^{(\alpha)}$ are positive and $\omega_{s}^{(\alpha)} > \omega_{s+1}^{(\alpha)} $ for all $s=0,1,\dots$
\end{lemma}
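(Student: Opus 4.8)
The plan is to exploit that, on setting $\beta := 1-\alpha$, the hypothesis $0<\alpha<1$ gives $\beta\in(0,1)$, and the weight
\[
\omega_s^{(\alpha)} = (s+1)^{\beta}-s^{\beta}
\]
is precisely the first forward difference of the power function $g(x):=x^{\beta}$. Both claimed properties then reduce to elementary monotonicity and concavity of $g$ on $[0,\infty)$, so no machinery beyond single-variable calculus is needed.

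First I would establish positivity. Since $\beta>0$, the map $x\mapsto x^{\beta}$ is strictly increasing on $[0,\infty)$, whence $(s+1)^{\beta}>s^{\beta}$ and therefore $\omega_s^{(\alpha)}>0$ for every $s\ge 0$.

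For the strict decrease $\omega_s^{(\alpha)}>\omega_{s+1}^{(\alpha)}$, I would apply the Mean Value Theorem to $g$ on each unit interval: for every $s\ge 0$ there is a point $\xi_s\in(s,s+1)$ with
\[
\omega_s^{(\alpha)} = (s+1)^{\beta}-s^{\beta} = \beta\,\xi_s^{\beta-1}.
\]
Because consecutive intervals are adjacent, the intermediate points obey $\xi_s<s+1<\xi_{s+1}$, so $0<\xi_s<\xi_{s+1}$. As $\beta-1<0$, the map $x\mapsto x^{\beta-1}$ is strictly decreasing on $(0,\infty)$, giving $\xi_s^{\beta-1}>\xi_{s+1}^{\beta-1}$; multiplying by $\beta>0$ yields $\omega_s^{(\alpha)}>\omega_{s+1}^{(\alpha)}$. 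Conceptually this is nothing but the strict concavity of $g$ (its second derivative $\beta(\beta-1)x^{\beta-2}$ is negative), which forces its forward differences to strictly decrease.

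I do not anticipate a genuine obstacle; the only point deserving a line of care is the case $s=0$, where $0^{\beta}=0$ and the derivative $x^{\beta-1}$ is unbounded near the origin. This causes no trouble, since the Mean Value Theorem point $\xi_0\in(0,1)$ is strictly positive, so $\xi_0^{\beta-1}$ is finite; alternatively one checks directly that $\omega_0^{(\alpha)}=1>2^{\beta}-1=\omega_1^{(\alpha)}$ because $2^{\beta}<2$. Recording this boundary remark is the extent of the subtlety.
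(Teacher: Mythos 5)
Your proof is correct, but there is nothing in the paper to compare it against: the paper states this lemma without any proof, treating it as a known property of the L1 quadrature weights from the literature it cites (Lin--Xu and Zhuang--Liu). Your argument supplies the standard elementary justification and does so cleanly. Positivity follows, as you say, from strict monotonicity of $x\mapsto x^{\beta}$ with $\beta=1-\alpha\in(0,1)$; the strict decrease follows from the Mean Value Theorem, since $\omega_s^{(\alpha)}=\beta\,\xi_s^{\beta-1}$ with $\xi_s\in(s,s+1)$, the points $\xi_s$ strictly increase, and $x\mapsto x^{\beta-1}$ is strictly decreasing. Your endpoint remark is exactly the right point of care: the MVT applies on $[0,1]$ because $x^{\beta}$ is continuous there and differentiable on the open interval, the blow-up of the derivative at $0$ being harmless since $\xi_0>0$. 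An equivalent packaging, sometimes preferred because it handles all $s$ uniformly with no case distinction, is the integral representation
\begin{equation*}
\omega_s^{(\alpha)} \;=\; \beta\int_{s}^{s+1} x^{\beta-1}\,dx,
\end{equation*}
from which positivity and strict decrease are immediate since the integrand is positive and strictly decreasing. One small point worth making explicit in your write-up: the strict inequalities require $0<\alpha<1$ (at $\alpha=1$ one has $\omega_s^{(\alpha)}=0$ for all $s\ge 1$), which is the regime in which the quadrature \eqref{eq:caputo-aprox} is used, so your standing hypothesis is the right one.
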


The nonnegativity of all variable coefficients of scheme (\ref{scheme0})-(\ref{schemek}) is a desirable feature. Definitions of $q$ and $h$ in (\ref{coe}) establish that they are nonnegative functions. For the other coefficients,  nonnegativity is achieved provided a mild assumption on the grid sizes is imposed. The details are in the following lemma.

\begin{lemma}\label{bounds:grids}  
	If the variable coefficients $a, b, c $ and $d$ satisfy the bounds \eqref{bounds} and $\max\{\Delta x,\Delta y\}\leq2D/A$, then $p_{i,j}%
	^{k}\geq0$ and $r_{i,j}^{k}\geq0$ for each $i=1,\hdots,N_{x}-1$%
	,\ $j=1,\hdots,N_{y}-1$ and $k=1,\hdots,N_{t}-1$.
\end{lemma}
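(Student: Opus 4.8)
The plan is to reduce the sign of each coefficient to a single inequality relating the grid size to the ratio of a dispersion coefficient to an advection coefficient, and then to absorb that inequality into the uniform bounds \eqref{bounds}. Since $0<\alpha<1$ forces $\Gamma(2-\alpha)>0$, and since $(\Delta t)^{\alpha}$ and the grid sizes appearing in \eqref{mu} are all strictly positive, the signs of $p_{i,j}^{k}$ and $r_{i,j}^{k}$ are governed entirely by the bracketed expressions in their definitions \eqref{coe}. I would therefore begin by isolating those brackets.

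First I would treat $p_{i,j}^{k}$. Substituting the definitions of $\mu_{1}$ and $\mu_{3}$ from \eqref{mu}, I would factor a common positive quantity out of $\mu_{3}c_{i,j}^{k}-\mu_{1}a_{i,j}^{k}$, writing it as $\frac{(\Delta t)^{\alpha}}{\Delta x}\left(\frac{c_{i,j}^{k}}{\Delta x}-\frac{a_{i,j}^{k}}{2}\right)$. Consequently $p_{i,j}^{k}\geq 0$ holds precisely when $2c_{i,j}^{k}\geq a_{i,j}^{k}\,\Delta x$. If $a_{i,j}^{k}=0$ this is immediate because $c_{i,j}^{k}>0$; otherwise it is equivalent to $\Delta x\leq 2c_{i,j}^{k}/a_{i,j}^{k}$.

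Next I would bound this threshold from below using \eqref{bounds}. Since $c_{i,j}^{k}\geq D$ and $0\leq a_{i,j}^{k}\leq A$, the chain $c_{i,j}^{k}A\geq DA\geq a_{i,j}^{k}D$ yields $2c_{i,j}^{k}/a_{i,j}^{k}\geq 2D/A$. The hypothesis $\Delta x\leq\max\{\Delta x,\Delta y\}\leq 2D/A$ then forces $\Delta x\leq 2c_{i,j}^{k}/a_{i,j}^{k}$, so $p_{i,j}^{k}\geq 0$. The argument for $r_{i,j}^{k}$ is word-for-word the same after replacing $(c,a,\mu_{3},\mu_{1},\Delta x)$ by $(d,b,\mu_{4},\mu_{2},\Delta y)$ and invoking $d_{i,j}^{k}\geq D$, $b_{i,j}^{k}\leq A$ together with $\Delta y\leq 2D/A$.

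I do not expect a genuine obstacle here; the only point demanding care is the possible vanishing of $a_{i,j}^{k}$ or $b_{i,j}^{k}$, which must be dispatched as a separate trivial case so that dividing by the advection coefficient in the reduction $\Delta x\leq 2c_{i,j}^{k}/a_{i,j}^{k}$ is legitimate. Everything else is an elementary manipulation of the positive constants in \eqref{mu} and the uniform bounds \eqref{bounds}.
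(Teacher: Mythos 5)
Your proposal is correct and takes essentially the same route as the paper: factor out the positive quantity $\Gamma(2-\alpha)\,(\Delta t)^{\alpha}/(\Delta x)^{2}$ and reduce everything to the uniform bounds \eqref{bounds} together with the hypothesis $\max\{\Delta x,\Delta y\}\leq 2D/A$. The only difference is cosmetic: the paper bounds the bracket directly via $c_{i,j}^{k}-\frac{\Delta x}{2}a_{i,j}^{k}\geq D-\frac{\Delta x}{2}A\geq 0$ (and likewise for $r_{i,j}^{k}$), which never divides by $a_{i,j}^{k}$ and so renders your separate case $a_{i,j}^{k}=0$ unnecessary.
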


\begin{proof}
	The proof consists on the following straightforward computations:
	\begin{linenomath*}
		\begin{equation*}
		p_{i,j}^{k}=\Gamma\left(  2-\alpha\right)  \frac{\left(  \Delta t\right)
			^{\alpha}}{(\Delta x)^{2}}\left[  c_{i,j}^{k}-\frac{\Delta x}{2}a_{i,j}%
		^{k}\right]  \geq\frac{\tau}{(\Delta x)^{2}}\left[  D-\frac{\Delta x}%
		{2}A\right]  \geq0\label{qg0}%
		\end{equation*}
	\end{linenomath*}
	and
	\begin{linenomath*}
		\begin{equation*}
		r_{i,j}^{k}=\frac{\tau}{(\Delta y)^{2}}\left[  d_{i,j}^{k}-\frac{\Delta y}%
		{2}b_{i,j}^{k}\right]  \geq\frac{\tau}{(\Delta y)^{2}}\left[  D-\frac{\Delta
			y}{2}A\right]  \geq0.\label{hg0} \qedhere
		\end{equation*}
	\end{linenomath*}
\end{proof}

\subsection{The linear system}

Let 
\begin{linenomath*}
	\begin{equation}
	v^{k} = \left[v_{\ast,1}^k \ \ v_{\ast,2}^k \ \cdots \ v_{\ast,N_{y}-1}^k \right]^T
	\label{vijk} 
	\end{equation}
\end{linenomath*}
where $v^k_{\ast,j} = \left[v_{1,j}^k \ \ v_{2,j}^k \ \cdots \ v_{N_{x}-1,j}^k \right]^T$, \quad
$j=1,\hdots, N_{y}-1$.

In the particular case $N_x=N_y=N$, the $(N-1)^2$ equations \eqref{scheme0}-\eqref{schemek} may be written in matrix form
\begin{linenomath*}
	\begin{equation}\label{eq:sys}
	A^{(k+1)} v^{k+1} = y^k 
	\end{equation}
\end{linenomath*}
for each $0\le k<N_t$, where $A^{(k)}$ is the $(N-1)^2\times (N-1)^2$ matrix of coefficients resulting 
from the system of difference equations at the gridpoints at level $t=t_k$,
$v^{k} = \left[v_{\ast,1}^k \ \ v_{\ast,2}^k \ \cdots \ v_{\ast,N-1}^k \right]^T$
with $v^k_{\ast,j} = \left[v_{1,j}^k \ \ v_{2,j}^k \ \cdots \ v_{N-1,j}^k \right]^T$, 
%for $j=1,\hdots, N_y-1$; 
and $y^{k} = \left[y_{\ast,1}^k \ \ y_{\ast,2}^k \ \cdots \ y_{\ast,N-1}^k \right]^T$
with
\begin{linenomath*}
	\begin{equation*}
	y^{k}_{\ast,j} =
	\begin{cases}
	\psi_{\ast,j} + \tau f_{\ast,j}^1,  &  k=0,\\
	\gamma v_{\ast,j}^{1} + \gamma \psi_{\ast,j} + \tau f_{\ast,j}^2,  &  k=1,\\
	\displaystyle \gamma v_{\ast,j}^{k} + \sum_{s=1}^{k-1} \left( \omega_s^{(\alpha)} - \omega_{s+1}^{(\alpha)} \right) v_{\ast,j}^{k-s} + \omega_k^{(\alpha)}\psi_{\ast,j} + \tau f_{\ast,j}^{k+1},  &   1< k< N_t,\\
	\end{cases}
	\end{equation*}
\end{linenomath*}
where $\psi_{\ast,j} = \left[\psi_{1,j} \ \ \psi_{2,j} \ \cdots \ \psi_{N-1,j} \right]^T$,
$f^k_{\ast,j} = \left[f_{1,j}^k \ \ f_{2,j}^k \ \cdots \ f_{N-1,j}^k \right]^T$ and
$\gamma = \left( 2 - 2^{1-\alpha} \right)$.

Eq. \eqref{eq:sys} requires, at each time step, to solve a linear system where 
the right-hand side $y^k$ utilizes all the history of the computed 
solution up to that time, and $A^{(k)}$ is a band matrix with a block structure. 
Each block is a $(N-1)\times(N-1)$ matrix and together they give $A^{(k)}$ the following form

\begin{linenomath*}
	\newcolumntype{C}[1]{@{}>{\rule[0.5\dimexpr-#1+1.2ex]{0pt}{#1}\hfil$}p{#1}<{$\hfil}@{}}
	\begin{equation}\label{matrix:A}
	A^{(k)} =
	\left[
	\begin{array}{ C{1.2cm} !{\color{gray!50}\vrule} C{1.2cm} !{\color{gray!50}\vrule} C{1.2cm} !{\color{gray!50}\vrule} C{1.2cm} !{\color{gray!50}\vrule} C{1.2cm} }
	T_{1}^k & D_{1}^k  &    0     &     \cdots    &   0       \\ \arrayrulecolor{gray!50}\hline
	\widetilde{D}_{1}^k & T_{2}^k  & D_{2}^k   &    \ddots     &  \vdots        \\ \arrayrulecolor{gray!50}\hline
	0    & \ddots & \ddots  & \ddots  &  0        \\ \arrayrulecolor{gray!50}\hline
	\vdots    &   \ddots     & \widetilde{D}_{N-3}^k & T_{N-2}^k & D_{N-2}^k  \\ \arrayrulecolor{gray!50}\hline
	0    &  \cdots      &    0     & \widetilde{D}_{N-2}^k & T_{N-1}^k
	\end{array}  
	\right].
	\end{equation}
\end{linenomath*}

In this expression each $T_{\ell}^k$ is a tridiagonal matrix given by
\begin{linenomath*}
	\[
	T_{\ell}^{(k)} =
	\begin{bmatrix}
	e_{1,\ell}^k & -p_{1,\ell}^k & 0       & \cdots & 0 \\[2pt]
	-q_{2,\ell}^k & e_{2,\ell}^k & -p_{2,\ell}^k  & \ddots & \vdots \\[5pt]
	0     & \ddots & \ddots & \ddots  & 0 \\[5pt]
	\vdots & \ddots  & -q_{N-2,\ell}^k & e_{N-2,\ell}^k & -p_{N-2,\ell}^k \\[2pt]
	0      & \cdots & 0      & -q_{N-1,\ell}^k & e_{N-1,\ell}^k
	\end{bmatrix},
	\]
\end{linenomath*}
while $D_{\ell}^k=[d_{i,j}^k]$ and $\widetilde{D}_{\ell}^k=[\widetilde{d}_{i.j}^k]$ are 
diagonal matrices defined by $d_{i,i}^k=-r_{i,\ell}^k$ and ${\widetilde{d}_{i,i}}^k=-h_{i,\ell}^k$, 
for $i=1,\hdots,N-1$.

\begin{remark}\label{diagonal}
	Note that for each $1\leq i \leq(N-1)^2$, there exists exactly one $1\leq \ell_i\leq N-1$ such that
	the resulting diagonal entry $A_{i,i}^{(k)}$ of \eqref{matrix:A} is determined by
	\begin{linenomath*}
		\begin{equation*}%\label{caputo}
		A_{i,i}^{(k)} := e_{i,\ell_i}^k = 1 + p_{i,\ell_i}^k + q_{i,\ell_i}^k + r_{i,\ell_i}^k + h_{i,\ell_i}^k.
		\end{equation*}
	\end{linenomath*}
	The off-diagonal entries $A_{i,j}^{(k)}$ with $i\ne j$, can be determined in the same way.
\end{remark}

\section{The approximation}\label{3}

In this section we prove the unconditional stability and the convergence of scheme \eqref{scheme0}-\eqref{schemek}. 
Both results are inspired by \cite{ZhuangLiu2007}. Let $v^k$ be given by \eqref{vijk} for $k=0,\dots,N_{t}.$
\subsection{Stability}
\begin{theorem} \label{stability}
	If the hypotheses of lemma \ref{bounds:grids} hold, scheme \eqref{scheme0}-\eqref{schemek} 
	for the homogeneous ($f \equiv 0$) initial-boundary value 
	problem \eqref{eq:tfade}-\eqref{cond:initial}-\eqref{cond:boundary} is unconditionally stable.
\end{theorem}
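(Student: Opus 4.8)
The plan is to establish stability in the discrete maximum norm $\|v^k\|_\infty := \max_{1\le i,j\le N-1}|v_{i,j}^k|$ by means of a discrete maximum principle, in the spirit of \cite{ZhuangLiu2007}. Since $f\equiv 0$ and the scheme is linear, it suffices to show directly that $\|v^{k}\|_\infty \le \|v^0\|_\infty = \|\psi\|_\infty$ for every $k=1,\dots,N_t$, which controls the propagation of any perturbation of the initial data. The argument proceeds by induction on $k$, and the whole mechanism rests on two structural facts already available: first, Lemma \ref{bounds:grids} together with the definitions \eqref{coe} guarantees that under the grid restriction $\max\{\Delta x,\Delta y\}\le 2D/A$ all five coefficients satisfy $p_{i,j}^{k},q_{i,j}^{k},r_{i,j}^{k},h_{i,j}^{k}\ge 0$ while $e_{i,j}^{k}=1+p_{i,j}^{k}+q_{i,j}^{k}+r_{i,j}^{k}+h_{i,j}^{k}$; second, the lemma asserting that the weights $\omega_s^{(\alpha)}$ are positive and strictly decreasing.

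First I would rewrite the right-hand side of \eqref{schemek} in a form with manifestly nonnegative coefficients. Regrouping the sum $\sum_{s=1}^{k}\omega_s^{(\alpha)}\bigl(v_{i,j}^{k-s+1}-v_{i,j}^{k-s}\bigr)$ by Abel summation turns the right-hand side into
\begin{linenomath*}
	\begin{equation*}
	y_{i,j}^{k}=\gamma\,v_{i,j}^{k}+\sum_{s=1}^{k-1}\bigl(\omega_s^{(\alpha)}-\omega_{s+1}^{(\alpha)}\bigr)v_{i,j}^{k-s}+\omega_k^{(\alpha)}v_{i,j}^{0},\qquad \gamma=2-2^{1-\alpha},
	\end{equation*}
\end{linenomath*}
which is precisely the right-hand side recorded in \eqref{eq:sys}. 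The decisive observation is that, by the lemma on the weights, every coefficient here is nonnegative, since $\gamma=1-\omega_1^{(\alpha)}>0$, $\omega_s^{(\alpha)}-\omega_{s+1}^{(\alpha)}>0$ and $\omega_k^{(\alpha)}>0$, and that they add up to $1$: the inner sum telescopes to $\omega_1^{(\alpha)}-\omega_k^{(\alpha)}$ and $\gamma+\bigl(\omega_1^{(\alpha)}-\omega_k^{(\alpha)}\bigr)+\omega_k^{(\alpha)}=1$.

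Next I would run the maximum-principle step. Assume inductively that $\|v^{m}\|_\infty\le\|v^0\|_\infty$ for all $m\le k$, and let $(i_0,j_0)$ be an interior index where $|v_{i_0,j_0}^{k+1}|=\|v^{k+1}\|_\infty$. Moving the off-diagonal terms of \eqref{schemek} to the right, taking absolute values and bounding each neighbouring value by $\|v^{k+1}\|_\infty$ gives
\begin{linenomath*}
	\begin{equation*}
	e_{i_0,j_0}^{k+1}\,\|v^{k+1}\|_\infty\le\bigl(p_{i_0,j_0}^{k+1}+q_{i_0,j_0}^{k+1}+r_{i_0,j_0}^{k+1}+h_{i_0,j_0}^{k+1}\bigr)\|v^{k+1}\|_\infty+\bigl|y_{i_0,j_0}^{k}\bigr|,
	\end{equation*}
\end{linenomath*}
where the nonnegativity of $p,q,r,h$ is exactly what permits replacing the neighbours by the maximum. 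Because $e_{i_0,j_0}^{k+1}$ exceeds the bracketed sum by precisely $1$, the off-diagonal contribution cancels and I am left with $\|v^{k+1}\|_\infty\le|y_{i_0,j_0}^{k}|$. Applying the triangle inequality to the convex-combination form of $y_{i_0,j_0}^{k}$ and invoking the induction hypothesis at all levels $m\le k$ then yields $|y_{i_0,j_0}^{k}|\le\bigl(\gamma+\sum_{s=1}^{k-1}(\omega_s^{(\alpha)}-\omega_{s+1}^{(\alpha)})+\omega_k^{(\alpha)}\bigr)\|v^0\|_\infty=\|v^0\|_\infty$, closing the induction; the base case $k=0$ is immediate since the right-hand side of \eqref{scheme0} reduces to $v_{i,j}^{0}$.

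The step I expect to be the main obstacle is making the maximum-principle inequality fully rigorous: the cancellation $e-(p+q+r+h)=1$ is clean only because every off-diagonal coefficient is nonnegative, so the entire argument hinges on the grid condition of Lemma \ref{bounds:grids}; without it one cannot bound the neighbouring values by the maximum with nonnegative weights. A secondary technical point is the bookkeeping of the Abel rearrangement and the verification that $\gamma$, $\omega_s^{(\alpha)}-\omega_{s+1}^{(\alpha)}$ and $\omega_k^{(\alpha)}$ genuinely form a nonnegative partition of unity, which is exactly where the positivity and monotonicity of the weights are indispensable.
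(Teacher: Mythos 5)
Your proof is correct and follows essentially the same route as the paper's own: an induction on $k$ in the discrete maximum norm, the Abel/telescoping rewriting of the memory term into a convex combination with nonnegative weights $\gamma=\omega_0^{(\alpha)}-\omega_1^{(\alpha)}$, $\omega_s^{(\alpha)}-\omega_{s+1}^{(\alpha)}$ and $\omega_k^{(\alpha)}$ summing to $1$, and a discrete maximum principle at a maximizing interior node, all resting on the nonnegativity of $p,q,r,h$ from Lemma \ref{bounds:grids}. The only difference is cosmetic bookkeeping: you move the off-diagonal terms to the right-hand side and apply the triangle inequality to get $\lVert v^{k+1}\rVert_\infty\le\lvert y_{i_0,j_0}^{k}\rvert$, whereas the paper keeps them on the left inside a single absolute value before identifying it with the right-hand side of the scheme.
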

\begin{proof}\
	\begin{enumerate}
		\item Scheme \eqref{scheme0}: Let $\lVert v^{1}\rVert_{\infty}=\left\vert v_{l,m}^{1}\right\vert =\max_{i,j}\left\vert v_{i,j}^{1}\right\vert .$ 
		We show that $\lVert v^{1} \rVert_{\infty} \leq \lVert v^0 \rVert_{\infty} $
		\begin{linenomath*}
			\begin{align*}
			\left\vert v_{l,m}^{1}\right\vert  & =\left(  1+p_{l,m}^{1}+q_{l,m}^{1}%
			+r_{l,m}^{1}+h_{l,m}^{1}\right)  \left\vert v_{l,m}^{1}\right\vert \\
			& -\left(  p_{l,m}^{1}\left\vert v_{l,m}^{1}\right\vert +q_{l,m}^{1}\left\vert
			v_{l,m}^{1}\right\vert \right)  -\left(  r_{l,m}^{1}\left\vert v_{l,m}^{1}\right\vert +h_{l,m}^{1}\left\vert
			v_{l,m}^{1}\right\vert \right)  \\
			& \leq\left(  1+p_{l,m}^{1}+q_{l,m}^{1}+r_{l,m}^{1}+h_{l,m}^{1}\right)  \left\vert
			v_{l,m}^{1}\right\vert \\
			& -\left(  p_{l,m}^{1}\left\vert v_{l+1,m}^{1}\right\vert +q_{l,m}^{1}\left\vert
			v_{l-1,m}^{1}\right\vert \right) -\left(  r_{l,m}^{1}\left\vert v_{l,m+1}^{1}\right\vert +h_{l,m}^{1}\left\vert
			v_{l,m-1}^{1}\right\vert \right)  \\
			& \leq\left\vert \left(  1+p_{l,m}^{1}+q_{l,m}^{1}+r_{l,m}^{1}+h_{l,m}^{1}\right)
			v_{l,m}^{1}\right.  \\
			& -\left(  p_{l,m}^{1}v_{l+1,m}^{1}+q_{l,m}^{1}v_{l-1,m}^{1}\right) \left.  -\left(  r_{l,m}^{1}v_{l,m+1}^{1}+h_{l,m}^{1}v_{l,m-1}^{1}\right)
			\right\vert  \\
			& =\left\vert v_{l,m}^{0}\right\vert \leq \lVert v^0 \rVert_{\infty}.
			\end{align*}
		\end{linenomath*}
		
		\item Scheme \eqref{schemek}: The proof is by induction over $k$. Suppose $\lVert v^n \rVert_{\infty} \leq \lVert v^0 \rVert_{\infty} $
		for $n=2,\dots,k.$ We prove the inequality $\lVert v^{k+1} \rVert_{\infty} \leq \lVert v^0 \rVert_{\infty}.$
		From now on, we will write $\omega_{s}$ rather than $\omega_{s}^{\left(  \alpha\right)  }.$ Notice that
		the right hand side in scheme \eqref{schemek} is%
		\begin{linenomath*}
			\begin{align*}
			v_{i, j}^{k} - \sum_{s=1}^{k} \omega_{s}\left(  v_{i,j}^{k-s+1}-v_{i,j}^{k-s}\right) 
			& =\omega_{0}v_{i,j}^{k}-\omega_{1}v_{i,j}^{k}+\omega_{1}v_{i,j}^{k-1}-\omega _{2}v_{i,j}^{k-1}+\omega_{2}v_{i,j}^{k-2} \\
			& -\ldots-\omega_{k}v_{i,j}^{1}+\omega_{k}v_{i,j}^{0}\\
			& =\left( \omega_{0}-\omega_{1}\right)  v_{i,j}^{k} + \left(  \omega_{1}-\omega_{2}\right)  v_{i,j}^{k-1} + \left( \omega_{2}-\omega_{3}\right)  v_{i,j}^{k-2} \\
			&  + \dots + \left(  \omega_{k-1}-\omega_{k}\right)  v_{i,j}^{1} + \omega_{k}v_{i,j}^{0}. 
			\end{align*}
		\end{linenomath*}
		Now we look at the left hand side. Let $\lVert v^{k+1} \rVert_{\infty}=\left\vert v_{l,m}^{k+1}\right\vert = \max_{i,j}\left\vert v_{i,j}^{k+1}\right\vert.$ 
		\begin{linenomath*}
			\begin{align*}
			\left\vert v_{l,m}^{k+1}\right\vert  & =\left(  1+p_{l,m}^{k+1}+q_{l,m}^{k+1}+r_{l,m}^{k+1}+h_{l,m}^{k+1}\right)  \left\vert v_{l,m}^{k+1}\right\vert \\
			& -\left(  p_{l,m}^{k+1}\left\vert v_{l,m}^{k+1}\right\vert +q_{l,m}^{k+1}\left\vert v_{l,m}^{k+1}\right\vert \right)  
			-\left(  r_{l,m}^{k+1}\left\vert v_{l,m}^{k+1}\right\vert +h_{l,m}^{k+1}\left\vert v_{l,m}^{k+1}\right\vert \right) \\
			& \leq\left(  1+p_{l,m}^{k+1}+q_{l,m}^{k+1}+r_{l,m}^{k+1}+h_{l,m}^{k+1}\right) \left\vert v_{l,m}^{k+1}\right\vert \\
			& -\left(  p_{l,m}^{k+1}\left\vert v_{l+1,m}^{k+1}\right\vert +q_{l,m}^{k+1}\left\vert v_{l-1,m}^{k+1}\right\vert \right)
			-\left(  r_{l,m}^{k+1}\left\vert v_{l,m+1}^{k+1}\right\vert +h_{l,m}^{k+1}\left\vert v_{l,m-1}^{k+1}\right\vert \right) \\
			& \leq\left\vert \left(  1+p_{l,m}^{k+1}+q_{l,m}^{k+1}+r_{l,m}^{k+1}+h_{l,m}^{k+1}\right)  v_{l,m}^{k+1}\right.  \\
			& -\left(  p_{l,m}^{k+1}v_{l+1,m}^{k+1}+q_{l,m}^{k+1}v_{l-1,m}^{k+1}\right) 
			\left.  -\left(  r_{l,m}^{k+1}v_{l,m+1}^{k+1}+h_{l,m}^{k+1}v_{l,m-1}^{k+1}\right)  \right\vert  \\
			& =\left\vert \left(  \omega_{0}-\omega_{1}\right)  v_{l,m}^{k} + \dots + \left(  \omega_{k-1}-\omega_{k}\right)  v_{l,m}^{1} + \omega_{k}v_{l,m}^{0} \right\vert \\
			& \leq \left(  \omega_{0}-\omega_{1}\right) \left\Vert  v^{k} \right\Vert_{\infty} + \dots + \left(  \omega_{k-1}-\omega_{k}\right)  \left\Vert v^{1} \right\Vert_{\infty} + 
			\omega_{k} \left\Vert v^{0} \right\Vert_{\infty} \\
			& \leq \left(  \omega_{0}-\omega_{1}\right) \left\Vert  v^{0} \right\Vert_{\infty} + \dots + \left(  \omega_{k-1}-\omega_{k}\right)  \left\Vert v^{0} \right\Vert_{\infty} + 
			\omega_{k} \left\Vert v^{0} \right\Vert_{\infty} \\
			% & = \left(  \omega_{0}-\omega_{k-1}\right) \left\Vert  v^{0} \right\Vert_{\infty}\\
			& = {\left\Vert v^{0}\right\Vert}_{\infty}. \qedhere
			\end{align*}
		\end{linenomath*}
	\end{enumerate}
	
\end{proof}

This theorem allows us to prove an additional stability bound. Let $v_{ij}^{0}$
and $\tilde{v}_{ij}^{0}$ be the initial discrete values corresponding to two
initial conditions $\psi _{ij}$ and $\tilde{\psi}_{ij}.$ We may think of two
different measurements of the initial concentration. Furthermore, let $%
v_{ij}^{k}$ and $\tilde{v}_{ij}^{k}$ be the corresponding discrete
approximations obtained by the numerical schemes \eqref{scheme0} and \eqref{schemek}.  Let $%
\varepsilon _{ij}^{k}=v_{ij}^{k}-\tilde{v}_{ij}^{k}$ and 
\begin{linenomath*}
	\begin{equation}
	E^{k}=\left[ \varepsilon _{\ast ,1}^{k}\ \ \varepsilon _{\ast ,2}^{k}\
	\cdots \ \varepsilon _{\ast ,N_{y}-1}^{k}\right] ^{T}  \label{eijk}
	\end{equation}%
\end{linenomath*}
where $\varepsilon _{\ast ,j}^{k}=\left[ \varepsilon _{1,j}^{k}\ \
\varepsilon _{2,j}^{k}\ \cdots \ \varepsilon _{N_{x}-1,j}^{k}\right] ^{T}$,
\quad $j=1,\dots ,N_{y}-1$.

\begin{corollary}
	If the hypotheses of lemma \ref{bounds:grids} are satisfied, the numerical errors induced by 
	initial-value conditions in scheme \eqref{scheme0}-\eqref{schemek} for the inhomogeneous initial-boundary value 
	problem \eqref{eq:tfade}-\eqref{cond:initial}-\eqref{cond:boundary} do not propagate. More precisely, they satisfy 
	the bound
	\begin{linenomath*}
		\[
		\left\Vert E^{k}\right\Vert _{\infty }\leq \left\Vert E^{0}\right\Vert_{\infty },\qquad k=1,2,\ldots. 
		\]
	\end{linenomath*}
\end{corollary}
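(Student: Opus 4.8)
The plan is to reduce the statement to the unconditional stability of the homogeneous scheme established in Theorem~\ref{stability}, exploiting the linearity of \eqref{scheme0}--\eqref{schemek}. The crucial observation is that the coefficients $p_{i,j}^{k}$, $q_{i,j}^{k}$, $r_{i,j}^{k}$, $h_{i,j}^{k}$ and $e_{i,j}^{k}$ from \eqref{coe}, together with the weights $\omega_{s}$ and the factor $\tau$, depend only on the data $a,b,c,d$ and on the grid sizes, and are completely independent of the discrete solution. Hence both $\{v_{i,j}^{k}\}$ and $\{\tilde v_{i,j}^{k}\}$ satisfy \eqref{scheme0}--\eqref{schemek} with identical coefficients and identical source terms $\tau f_{i,j}^{k+1}$.

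First I would write scheme \eqref{schemek} for $v_{i,j}^{k}$ and the same equation for $\tilde v_{i,j}^{k}$, and subtract them term by term. Because the coefficients match and the inhomogeneous contributions $\tau f_{i,j}^{k+1}$ are identical, the source cancels and the error $\varepsilon_{i,j}^{k}=v_{i,j}^{k}-\tilde v_{i,j}^{k}$ is seen to satisfy the \emph{homogeneous} scheme
\begin{multline*}
-\left(p_{i,j}^{k+1}\varepsilon_{i+1,j}^{k+1}+q_{i,j}^{k+1}\varepsilon_{i-1,j}^{k+1}\right)+e_{i,j}^{k+1}\varepsilon_{i,j}^{k+1}-\left(r_{i,j}^{k+1}\varepsilon_{i,j+1}^{k+1}+h_{i,j}^{k+1}\varepsilon_{i,j-1}^{k+1}\right)\\
=\varepsilon_{i,j}^{k}-\sum_{s=1}^{k}\omega_{s}\left(\varepsilon_{i,j}^{k-s+1}-\varepsilon_{i,j}^{k-s}\right),
\end{multline*}
and, for $k=0$, the corresponding zero-source version of \eqref{scheme0}. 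The Dirichlet data cancel as well, so $\varepsilon_{i,j}^{k}$ vanishes on $\partial\Omega$, exactly as in the setting of Theorem~\ref{stability}.

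It then remains only to apply that theorem. Since the hypotheses of Lemma~\ref{bounds:grids} are in force, the sequence $\{\varepsilon_{i,j}^{k}\}$ is a solution of the homogeneous scheme, and Theorem~\ref{stability} gives directly $\left\Vert E^{k}\right\Vert_{\infty}\leq\left\Vert E^{0}\right\Vert_{\infty}$ for every $k$, which is the asserted non-propagation of the initial-value error.

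I do not expect a genuine obstacle here: the whole argument is the linearity reduction. The only point deserving care is to confirm that neither the coefficients nor the right-hand side couple to the solution, so that the subtraction yields the homogeneous scheme verbatim; once this is checked, the corollary is immediate, since the $\ell^{\infty}$ induction in the proof of Theorem~\ref{stability} applies to $\{\varepsilon_{i,j}^{k}\}$ without change.
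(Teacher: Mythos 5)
Your proposal is correct and is exactly the argument the paper intends: the corollary is stated right after Theorem~\ref{stability} with the error $\varepsilon_{ij}^{k}=v_{ij}^{k}-\tilde{v}_{ij}^{k}$ defined precisely so that, by linearity of \eqref{scheme0}--\eqref{schemek} with solution-independent coefficients, the source terms $\tau f_{i,j}^{k+1}$ and the zero Dirichlet data cancel under subtraction, leaving $\varepsilon$ a solution of the homogeneous scheme to which the stability theorem applies verbatim. Your care in checking that neither the coefficients nor the right-hand side depend on the discrete solution is the one substantive point, and you handled it correctly.
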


\subsection{Convergence}

Stability and convergence proofs follow similar patterns. Let $%
\varepsilon _{ij}^{k}=u_{ij}^{k}-v_{ij}^{k}$ and 
\begin{linenomath*}
	\begin{equation}
	E^{k}=\left[ \varepsilon _{\ast ,1}^{k}\ \ \varepsilon _{\ast ,2}^{k}\
	\cdots \ \varepsilon _{\ast ,N_{y}-1}^{k}\right] ^{T}  \label{erijk}
	\end{equation}%
\end{linenomath*}
where $\varepsilon _{\ast ,j}^{k}=\left[ \varepsilon _{1,j}^{k}\ \
\varepsilon _{2,j}^{k}\ \cdots \ \varepsilon _{N_{x}-1,j}^{k}\right] ^{T}$,
\quad $j=1,\dots ,N_{y}-1$.
The convergence of the scheme is given by the following theorem.

\begin{theorem}
	If the hypotheses of lemma \ref{bounds:grids} hold, then % there is a constant $C$ independent of $\Delta t, \Delta x $ and $\Delta y$ so that 
	\begin{linenomath*}
		\begin{equation}\label{convergence}
		\left\Vert E^{k}\right\Vert _{\infty }\leq \lVert E^{0} \rVert_{\infty} + \left(\Delta t\right)^{\alpha} O\left(\Delta \right),
		\qquad k=1,2,\ldots 
		\end{equation}
	\end{linenomath*}
	where $O\left(\Delta \right)$ is defined by \eqref{bigodel}.
\end{theorem}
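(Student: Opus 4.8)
The plan is to mirror the stability argument of Theorem~\ref{stability}, replacing its homogeneous recursion by one that carries a truncation source. First I would write down the equation governing the error $\varepsilon_{i,j}^{k}=u_{i,j}^{k}-v_{i,j}^{k}$. Since the exact solution satisfies $S(u)=f$, the consistency estimate \eqref{fs1} gives $f_{i,j}^{k+1}-S_{\Delta}(u)=R_{i,j}^{k+1}$ with $\lVert R^{k+1}\rVert_{\infty}=O(\Delta)$ in the sense of \eqref{bigodel}, whereas the numerical solution satisfies $S_{\Delta}(v)=f_{i,j}^{k+1}$ exactly. Because the coefficients in \eqref{coe} depend only on $a,b,c,d$ and not on the grid function, $S_{\Delta}$ is linear in its argument; subtracting the two identities and multiplying by $\tau$ from \eqref{mu} then shows that $\varepsilon$ obeys precisely the systems \eqref{scheme0}--\eqref{schemek} with $v$ replaced by $\varepsilon$ and the genuine source $\tau f_{i,j}^{k+1}$ replaced by the truncation term $\tau R_{i,j}^{k+1}$ (the physical source $f$ cancels).

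Next I would repeat the maximum-norm estimate verbatim. Fixing an index $(l,m)$ with $\lVert E^{k+1}\rVert_{\infty}=|\varepsilon_{l,m}^{k+1}|$ and invoking the nonnegativity of $p,q,r,h$ from Lemma~\ref{bounds:grids} together with $e=1+p+q+r+h$, the same chain of inequalities as in Theorem~\ref{stability} bounds $\lVert E^{k+1}\rVert_{\infty}$ by the modulus of the right-hand side of \eqref{schemek}. As in that proof, for $k\geq 1$ the right-hand side telescopes into $(\omega_{0}-\omega_{1})\varepsilon_{l,m}^{k}+\cdots+(\omega_{k-1}-\omega_{k})\varepsilon_{l,m}^{1}+\omega_{k}\varepsilon_{l,m}^{0}+\tau R_{l,m}^{k+1}$. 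Writing $C_{n}:=\lVert E^{n}\rVert_{\infty}$ and $\rho:=\max_{k}\tau\lVert R^{k+1}\rVert_{\infty}=(\Delta t)^{\alpha}O(\Delta)$, taking absolute values yields the scalar recursion
\[
C_{k+1}\leq\sum_{s=0}^{k-1}\left(\omega_{s}-\omega_{s+1}\right)C_{k-s}+\omega_{k}C_{0}+\rho ,
\]
with the case $k=0$ giving $C_{1}\leq C_{0}+\rho$ directly from \eqref{scheme0}.

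The crux is closing this recursion: unlike in the stability proof, a truncation term is injected at every level, and the main obstacle is to show it does not amplify through the history sum. I would resolve this by induction on $n$, proving the sharper bound $C_{n}\leq C_{0}+\omega_{n-1}^{-1}\rho$. The inductive step rests on the telescoping identity $\sum_{s=0}^{k-1}(\omega_{s}-\omega_{s+1})=1-\omega_{k}$ and on the strict monotonicity $\omega_{k}<\omega_{k-1}$ supplied by the lemma on the quadrature weights; after substituting the hypothesis, the inequality to be verified collapses to $\frac{1-\omega_{k}}{\omega_{k-1}}+1\leq\omega_{k}^{-1}$, which is equivalent to $\omega_{k}\leq\omega_{k-1}$ and hence holds. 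Finally, estimating $\omega_{n-1}=n^{1-\alpha}-(n-1)^{1-\alpha}\geq(1-\alpha)\,n^{-\alpha}$ by the mean value theorem gives
\[
\omega_{n-1}^{-1}\rho\leq\frac{n^{\alpha}}{1-\alpha}\,(\Delta t)^{\alpha}O(\Delta)=\frac{t_{n}^{\alpha}}{1-\alpha}\,O(\Delta)\leq\frac{T^{\alpha}}{1-\alpha}\,O(\Delta),
\]
so that the accumulated factor is a fixed constant absorbed into $O(\Delta)$ and the bound \eqref{convergence} follows. I expect the delicate point to be exactly this control of accumulation: a naive repetition of the stability induction formally reproduces $\lVert E^{k}\rVert_{\infty}\leq\lVert E^{0}\rVert_{\infty}+(\Delta t)^{\alpha}O(\Delta)$, but justifying that the per-step truncation stays uniformly bounded over all $N_{t}$ levels is what genuinely requires the monotonicity of the weights.
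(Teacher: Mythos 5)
Your proof is correct, but it does not take the paper's route; in fact it is more careful. The paper proves \eqref{convergence} by running verbatim the maximum-norm induction of Theorem \ref{stability}: after telescoping the history term it substitutes $\lVert E^{k-s}\rVert_\infty \le \lVert E^{0}\rVert_\infty$ and keeps only the single truncation contribution $\tau\,O(\Delta)$ from level $k+1$ --- that is, it silently drops the $(\Delta t)^{\alpha}O(\Delta)$ part of its own inductive hypothesis when bounding $\lVert E^{k-s}\rVert_\infty$. Read with a hidden constant that is fixed, that induction does not close: every level injects a fresh $\tau\,O(\Delta)$, and since $\sum_{s=0}^{k-1}(\omega_{s}-\omega_{s+1})=1-\omega_{k}$ with $\omega_{k}\to 0$, the constant would have to grow like $\omega_{k}^{-1}$. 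Your proposal confronts exactly this point, which is the genuine content of the theorem. The sharper intermediate estimate $\lVert E^{n}\rVert_\infty\le\lVert E^{0}\rVert_\infty+\omega_{n-1}^{-1}\tau\,O(\Delta)$ closes the induction correctly: your reduction to $\frac{1-\omega_{k}}{\omega_{k-1}}+1\le\omega_{k}^{-1}$, which is equivalent to $\omega_{k}\le\omega_{k-1}$ because $1-\omega_{k}\ge 0$, checks out, as does the base case via $\omega_{0}=1$; the mean value theorem bound $\omega_{n-1}\ge(1-\alpha)n^{-\alpha}$ then gives $\omega_{n-1}^{-1}\,\Gamma(2-\alpha)(\Delta t)^{\alpha}\le\frac{\Gamma(2-\alpha)}{1-\alpha}\,t_{n}^{\alpha}\le\frac{\Gamma(2-\alpha)}{1-\alpha}\,T^{\alpha}$. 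This is the classical device of \cite{LinXu2007,ZhuangLiu2007}, and what it buys is a constant that is genuinely uniform in $k$ and in the grid, whereas the paper's shortcut buys brevity at the cost of rigor in precisely the step you flag. One caveat on the conclusion: what your argument delivers is $\lVert E^{k}\rVert_\infty\le\lVert E^{0}\rVert_\infty+C_{T,\alpha}\,O(\Delta)$ with $C_{T,\alpha}=\Gamma(2-\alpha)T^{\alpha}/(1-\alpha)$ --- the accumulation factor $\omega_{k-1}^{-1}$ exactly consumes the prefactor $(\Delta t)^{\alpha}$ in \eqref{convergence}, so your bound is formally weaker than the literal display but is the form the estimate honestly takes uniformly in $k$; the paper attains the literal $(\Delta t)^{\alpha}O(\Delta)$ only through the loose substitution noted above. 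Your preliminary error-equation setup (linearity of $S_{\Delta}$, cancellation of the physical source, per-step residual $\tau R$ with $\lVert R\rVert_\infty=O(\Delta)$ from \eqref{fs1}) matches what the paper carries out inline inside its chain of moduli.
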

\begin{proof}
	The proof is by induction on $k.$
	\item Case $k=1.$ We show that 
	\begin{linenomath*}
		\[
		\lVert E^{1} \rVert_{\infty} \leq  \lVert E^{0} \rVert_{\infty} + \left(\Delta t\right)^{\alpha} O\left(\Delta \right).
		\]
	\end{linenomath*}
	Let $\lVert E^{1}\rVert_{\infty}=\left\vert \varepsilon_{lm}^{1}\right\vert
	=\max_{i,j}\left\vert \varepsilon_{i,j}^{1}\right\vert$. In this case the scheme under consideration is \eqref{scheme0}.
	\begin{linenomath*}
		\begin{align*}
		\left\vert \varepsilon_{l,m}^{1}\right\vert  &= \left(  1+p_{l,m}^{1}+q_{l,m}^{1} + r_{l,m}^{1}+h_{l,m}^{1}\right)  \left\vert \varepsilon_{l,m}^{1}\right\vert 
		-\left(  p_{l,m}^{1}\left\vert \varepsilon_{l,m}^{1}\right\vert +q_{l,m}^{1}\left\vert \varepsilon_{l,m}^{1}\right\vert \right)  
		-\left(  r_{l,m}^{1}\left\vert \varepsilon_{l,m}^{1}\right\vert +h_{l,m}^{1}\left\vert \varepsilon_{l,m}^{1}\right\vert \right)  \\
		& \leq\left(  1+p_{l,m}^{1}+q_{l,m}^{1}+r_{l,m}^{1}+h_{l,m}^{1}\right)  \left\vert \varepsilon_{l,m}^{1}\right\vert 
		-\left(  p_{l,m}^{1}\left\vert \varepsilon_{l+1,m}^{1}\right\vert +q_{l,m}^{1}\left\vert \varepsilon_{l-1,m}^{1}\right\vert \right)  \\
		& -\left(  r_{l,m}^{1}\left\vert \varepsilon_{l,m+1}^{1}\right\vert +h_{l,m}^{1}\left\vert \varepsilon_{l,m-1}^{1}\right\vert \right)  \\
		& \leq\left\vert \left( 1+p_{l,m}^{1}+q_{l,m}^{1}+r_{l,m}^{1}+h_{l,m}^{1}\right) \varepsilon_{l,m}^{1}\right.  
		-\left(  p_{l,m}^{1}\varepsilon_{l+1,m}^{1}+q_{l,m}^{1}\varepsilon_{l-1,m}^{1}\right)  \\
		& \left.  -\left(  r_{l,m}^{1}\varepsilon_{l,m+1}^{1}+h_{l,m}^{1}\varepsilon_{l,m-1}^{1}\right) \right\vert \\
		& = \left\vert \left(  1+p_{l,m}^{1}+q_{l,m}^{1}+r_{l,m}^{1}+h_{l,m}^{1}\right) u_{l,m}^{1} - \left(  p_{l,m}^{1}u_{l+1,m}^{1}+q_{l,m}^{1}u_{l-1,m}^{1} \right) \right.\\
		&\left. -\left(  r_{l,m}^{1} u_{l,m+1}^{1}+h_{l,m}^{1}u_{l,m-1}^{1}\right) \right. -\left(  1+p_{l,m}^{1}+q_{l,m}^{1}+r_{l,m}^{1}+h_{l,m}^{1}\right) v_{l,m}^{1}  \\
		& + \left(  p_{l,m}^{1}v_{l+1,m}^{1}+q_{l,m}^{1}v_{l-1,m}^{1} \right) \left. +\left(  r_{l,m}^{1}v_{l,m+1}^{1}+h_{l,m}^{1}v_{l,m-1}^{1}\right) \right\vert  \\
		& =\left\vert u_{l,m}^1 + \tau \left( S_{\Delta}\left(u_{l,m}^{1}\right) -\sigma_{\alpha,\Delta t}\left(u_{l,m}^1-u_{l,m}^0\right)\right) \right. 
		-\left(  1+p_{l,m}^{1}+q_{l,m}^{1}+r_{l,m}^{1}+h_{l,m}^{1}\right) v_{l,m}^{1} \\
		& +\left(  p_{l,m}^{1}v_{l+1,m}^{1}+q_{l,m}^{1}v_{l-1,m}^{1} \right)  \left. +\left(  r_{l,m}^{1}v_{l,m+1}^{1}+h_{l,m}^{1}v_{l,m-1}^{1}\right) \right\vert  \\
		& = \left\vert u_{l,m}^0 + \tau \left( S\left(u_{l,m}^{1}\right) + O\left(\Delta\right)\right) \right.
		-\left(  1+p_{l,m}^{1}+q_{l,m}^{1}+r_{lm}^{1}+h_{l,m}^{1}\right) v_{l,m}^{1} \\
		& +\left(  p_{l,m}^{1}v_{l+1,m}^{1}+q_{l,m}^{1}v_{l-1,m}^{1} \right) + \left(  r_{l,m}^{1}v_{l,m+1}^{1}+h_{l,m}^{1}v_{l,m-1}^{1}\right) \vert \\
		& = \left\vert u_{l,m}^0 + \tau f_{l,m}^1 + \tau\, O\left(\Delta\right)-v_{l,m}^0 - \tau f_{lm}^1 \right\vert \\
		& = \left\vert \varepsilon_{l,m}^0 + \tau\, O\left(\Delta\right) \right\vert \\
		& \leq \left\Vert E^{0} \right\Vert_{\infty} + (\Delta t)^{\alpha}\, O\left(\Delta\right).
		\end{align*}
	\end{linenomath*}
	Now suppose
	\begin{linenomath*}
		\[
		\left\Vert E^{s} \right\Vert_{\infty} \leq \left\Vert E^{0} \right\Vert_{\infty} + (\Delta t)^{\alpha}\, O\left(\Delta\right)
		\]
	\end{linenomath*}
	holds for $s=1,2,\ldots,k.$ We prove the result for $s=k+1$. 
	Let $\lVert E^{k+1}\rVert_{\infty}=\left\vert \varepsilon_{lm}^{k+1}\right\vert
	=\max_{i,j}\left\vert \varepsilon_{ij}^{k+1}\right\vert$.
	By the same argument as before, 
	
	\begin{linenomath*}
		\begin{align*}
		\left\vert \varepsilon_{lm}^{k+1}\right\vert  
		& \leq \left\vert \left(  1+p_{l,m}^{k+1}+q_{l,m}^{k+1}+r_{l,m}^{k+1}+h_{l,m}^{k+1}\right) u_{l,m}^{k+1} 
		-\left(  p_{l,m}^{k+1}u_{l+1,m}^{k+1}+q_{l,m}^{k+1}u_{l-1,m}^{k+1} \right) \right.\\
		& \left. -\left(  r_{l,m}^{k+1} u_{l,m+1}^{k+1}+h_{l,m}^{k+1} u_{l,m-1}^{k+1}\right) \right. \\
		& \left. -\left(  1+p_{l,m}^{k+1} + q_{l,m}^{k+1}+r_{l,m}^{k+1}+h_{l,m}^{k+1}\right) v_{l,m}^{k+1} 
		+\left(  p_{l,m}^{k+1}v_{l+1,m}^{k+1}+q_{l,m}^{k+1}v_{l-1,m}^{k+1} \right) \right. \\
		& \left. +\left(  r_{l,m}^{k+1}v_{l,m+1}^{k+1}+h_{l,m}^{k+1}v_{l,m-1}^{k+1}\right) 
		\right\vert .
		\end{align*}
	\end{linenomath*}
	In this case the scheme is \eqref{schemek} and as before, we write $\omega_{s}$ instead of $\omega_{s}^{(\alpha)}.$ 
	The last expression becomes
	\begin{linenomath*}
		\begin{align*}
		& \left\vert u_{l,m}^{k+1} + \tau \left( S_{\Delta}\left(u_{l,m}^{k+1}\right) 
		- \sigma_{\alpha,\Delta t} \sum_{s=0}^{k}\omega _{s}\left( u_{l,m}^{k-s+1}-u_{l,m}^{k-s}\right)\right) \right. \\
		& \left. -v_{l,m}^{k} + \sum_{s=1}^{k}\omega _{s}\left( v_{l,m}^{k-s+1}-v_{l,m}^{k-s}\right) -\tau f_{l,m}^{k+1} \right\vert \\
		& = \left\vert u_{l,m}^{k+1} + \tau S\left(u_{l,m}^{k+1}\right) + \tau O\left(\Delta \right)
		- \sum_{s=0}^{k}\omega _{s}\left( u_{l,m}^{k-s+1}-u_{l,m}^{k-s}\right) \right. \\
		& \left. -v_{l,m}^{k} + \sum_{s=1}^{k}\omega _{s}\left(
		v_{l,m}^{k-s+1}-v_{l,m}^{k-s}\right) -\tau f_{l,m}^{k+1} \right\vert \\
		& = \left\vert u_{l,m}^{k+1} + \tau\, f_{l,m}^{k+1} + \tau O\left(\Delta \right)
		- \sum_{s=0}^{k}\omega _{s}\left( u_{l,m}^{k-s+1}-u_{l,m}^{k-s}\right) \right. \\
		& \left. -v_{l,m}^{k} + \sum_{s=1}^{k}\omega _{s}\left(
		v_{l,m}^{k-s+1}-v_{l,m}^{k-s}\right) -\tau f_{l,m}^{k+1} \right\vert \\
		& = \left\vert \sum_{s=1}^{k} \left(\omega _{s-1} - \omega_{s} \right) u_{l,m}^{k-s+1} 
		+ \omega_{k} u_{l,m}^{0} + \tau\, O\left(\Delta\right) \right. \left.  -v_{l,m}^{k} + \sum_{s=1}^{k}\omega _{s}\left(v_{l,m}^{k-s+1}-v_{l,m}^{k-s}\right)   \right\vert \\
		& = \left\vert \sum_{s=0}^{k-1} \left(\omega _{s} - \omega_{s+1} \right) \varepsilon_{l,m}^{k-s}
		+ \omega_{k} \varepsilon_{l,m}^{0} + \tau\, O\left(\Delta\right) \right\vert \\
		& \leq \sum_{s=0}^{k-1} \left(\omega _{s} - \omega_{s+1} \right) \left\vert \varepsilon_{l,m}^{k-s} \right\vert
		+ \omega_{k} \left\vert \varepsilon_{l,m}^{0} \right\vert + \tau\, O\left(\Delta\right) \\
		& \leq \sum_{s=0}^{k-1} \left(\omega _{s} - \omega_{s+1} \right) \left\Vert E^{k-s} \right\Vert_{\infty}
		+ \omega_{k} \left\Vert E^{0} \right\Vert_{\infty} + \tau\, O\left(\Delta\right) \\
		& \leq \sum_{s=0}^{k-1} \left(\omega _{s} - \omega_{s+1} \right) \left\Vert E^{0} \right\Vert_{\infty}
		+ \omega_{k} \left\Vert E^{0} \right\Vert_{\infty} + \tau\, O\left(\Delta\right) \\
		%   & = \sum_{s=0}^{k-2} \left(\omega _{s} - \omega_{s+1} \right) \left\Vert E^{0} \right\Vert_{\infty}
		%   + \tau O\left(\Delta\right) \\
		%   & = \left(\omega _{0} - \omega_{k-1} \right) \left\Vert E^{0} \right\Vert_{\infty} + \tau O\left(\Delta\right) \\
		& = \left\Vert E^{0} \right\Vert_{\infty} + (\Delta t)^{\alpha}\, O\left(\Delta\right). \qedhere \\
		\end{align*}
	\end{linenomath*}
\end{proof}

\section{Numerical experiments and final remarks}\label{4}
In order to demonstrate the reliability of our numerical method, three examples are presented. 
The absolute errors in the approximation $v$ of $u$ at time $t=t_k$ are measured by the maximum norm
\begin{linenomath*}
	\[
	{\left\Vert v^{k} - u(t_k) \right\Vert}_{\infty} := \max_{i,j} \left| v_{i,j}^{k} - u_{i,j}^{k} \right|.
	\]
\end{linenomath*}
\begin{example}\label{ex:1}
	We consider the time fractional advection-dispersion equation
	\begin{linenomath*}
		\begin{multline*}
		% \begin{split}
		u^{(\alpha)}_t(x,y,t) + a(x,y,t)u_x(x,y,t) + b(x,y,t)u_y(x,y,t) \\ = c(x,y,t) u_{xx}(x,y,t) 
		+ d(x,y,t) u_{yy}(x,y,t) + f(x,y,t)
		% \end{split}
		\end{multline*}
	\end{linenomath*}
	on a finite square domain $\Omega = (0,1)\times (0,1)$ for $0\leq t\leq 1$,
	with the initial condition
	\begin{linenomath*}
		\begin{equation*}
		u(x,y,0) = \sin\pi x \sin\pi y, \qquad (x,y)\in\Omega,
		\end{equation*}
	\end{linenomath*}
	\begin{table}[t]
		\centering		
		\par
		\renewcommand{\arraystretch}{1.3}
		\begin{tabular}{p{2cm}p{2cm}p{3cm}p{2cm}}
			\hline\hline
			\rowcolor{gray!15}
			$\Delta t$ & $\Delta x = \Delta y$ &  Max. error & Order \\ %\hline
			\hline
			\rowcolor{gray!15}
			$\alpha = 0.1$   &         &           &   \\  
			\hline
			$1/16$           & $1/4$   & 1.440e-01 & - - - - \\ 
			$1/32$           & $1/8$   & 4.070e-02 & 1.823   \\ 
			$1/64$           & $1/16$  & 1.043e-02 & 1.964  \\ 
			$1/128$          & $1/32$  & 2.607e-03 & 2.001  \\
			$1/256$          & $1/64$  & 6.530e-04 & 1.997  \\ 
			\hline
			\rowcolor{gray!15}
			$\alpha = 0.5$   &         &           &   \\  
			\hline
			$1/16$           & $1/4$   & 1.415e-01 & - - - - \\ 
			$1/32$           & $1/8$   & 4.055e-02 & 1.803  \\ 
			$1/64$           & $1/16$  & 1.045e-02 & 1.957  \\ 
			$1/128$          & $1/32$  & 2.625e-03 & 1.993  \\
			$1/256$          & $1/64$  & 6.627e-04 & 1.986  \\  
			\hline
			\rowcolor{gray!15}
			$\alpha = 0.9$ &  &    &   \\  
			\hline
			$1/16$           & $1/4$   & 1.588e-01 & - - - - \\ 
			$1/32$           & $1/8$   & 4.434e-02 & 1.841   \\ 
			$1/64$           & $1/16$  & 1.189e-02 & 1.899   \\ 
			$1/128$          & $1/32$  & 3.365e-03 & 1.821  \\
			$1/256$          & $1/64$  & 1.053e-03 & 1.676   \\ 
			\hline\hline
		\end{tabular}%
		\caption{Absolute errors and order of convergence at $t=1$ for Example \ref{ex:1}.}		
		\label{table:ex1} 
	\end{table}
	and the boundary condition
	\begin{linenomath*}
		\begin{equation*}
		u(x,y,t) = 0, \qquad  (x,y)\in\partial\Omega\times (0,1].
		\end{equation*}
	\end{linenomath*}
	The advection and dispersion coefficients are given by
	\begin{linenomath*}
		\begin{equation*}
		a(x,y,t) = \frac{1}{\sin\pi y}, \qquad b(x,y,t) = \frac{1}{\sin\pi x}, \\
		\end{equation*}
	\end{linenomath*}
	and
	\begin{linenomath*}
		\begin{equation*} 
		c(x,y,t) = \frac{x}{\pi^2 \Gamma(3-\alpha)(t^2+1)}, \qquad d(x,y,t) = \frac{y}{\pi^2 \Gamma(3-\alpha)(t^2+1)},
		\end{equation*}
	\end{linenomath*}
	respectively, the source or sink function is 
	\begin{linenomath*}
		\begin{equation*}
		f(x,y,t) = \dfrac{1}{\Gamma(3-\alpha)} \left( 2t^{2-\alpha} + x + y \right) \sin\pi x\sin\pi y + \pi(t^2+1)( \cos\pi x + \cos\pi y )
		\end{equation*}
	\end{linenomath*}
	and the exact concentration is
	\begin{linenomath*}
		\begin{equation*}
		u(x,y,t) = (t^2+1)\sin\pi x \sin\pi y.
		\end{equation*}
	\end{linenomath*}

	Numerical experiments for fractional derivatives of orders $\alpha=0.1, \alpha=0.5$ and $\alpha=0.9$
	are listed in Table \ref{table:ex1}. The columns for absolute errors and order of convergence are the main features 
	of this table. This is a somewhat extreme example due to the fact that the advection coefficients 
	$a$ and $b$ do not safisfy the bounds \eqref{bounds} 
	
\end{example}

\begin{table}[t]
	\centering	
	\par
	\renewcommand{\arraystretch}{1.3}
	\begin{tabular}{llcccccc}
		\hline\hline
		\rowcolor{gray!15}
		$\Delta t$ & $\Delta x = \Delta y$ &  \multicolumn{3}{c}{Absolute error} & \multicolumn{3}{c}{Order of convergence} \\ %\hline
		\hline
		\rowcolor{gray!15}
		$\alpha = 0.1$   &         & $\varepsilon=$1e-1 & $\varepsilon=$1e-3 & $\varepsilon=$1e-5 & $\varepsilon=$1e-1 & $\varepsilon=$1e-3 & $\varepsilon=$1e-5       \\  
		\hline
		$1/16$           & $1/4$   & 1.103e-01          & 1.431e-01          & 1.451e-01          & - - -              & - - -              & - - -   \\ 
		$1/32$           & $1/8$   & 2.982e-02          & 5.154e-02          & 5.622e-02          & 1.887              & 1.473              & 1.368    \\ 
		$1/64$           & $1/16$  & 7.703e-03          & 1.448e-02          & 1.711e-02          & 1.953              & 1.831              & 1.716    \\ 
		$1/128$          & $1/32$  & 1.927e-03          & 3.386e-03          & 4.484e-03          & 1.999              & 2.097              & 1.932    \\
		\rowcolor{gray!15}
		\hline
		$\alpha = 0.5$   &         & $\varepsilon=$1e-1 & $\varepsilon=$1e-3 & $\varepsilon=$1e-5 & $\varepsilon=$1e-1 & $\varepsilon=$1e-3 & $\varepsilon=$1e-5       \\  
		\hline
		$1/16$           & $1/4$   & 1.108e-01          & 1.445e-01          & 1.468e-01          & - - -              &  - - -             &  - - - \\ 
		$1/32$           & $1/8$   & 3.001e-02          & 5.079e-02          & 5.451e-02          & 1.884              & 1.508              & 1.430     \\ 
		$1/64$           & $1/16$  & 7.817e-03          & 1.406e-02          & 1.617e-02          & 1.941              & 1.853              & 1.753     \\ 
		$1/128$          & $1/32$  & 1.974e-03          & 3.169e-03          & 4.090e-03          & 1.986              & 2.150              & 1.983     \\
		\rowcolor{gray!15}
		\hline
		$\alpha = 0.9$   &         & $\varepsilon=$1e-1 & $\varepsilon=$1e-3 & $\varepsilon=$1e-5 & $\varepsilon=$1e-1 & $\varepsilon=$1e-3 & $\varepsilon=$1e-5       \\  
		\hline
		$1/16$           & $1/4$   & 1.228e-01          & 1.451e-01          & 1.470e-01          & - -                &  - -               &  - -   \\ 
		$1/32$           & $1/8$   & 3.377e-02          & 4.401e-02          & 4.523e-02          & 1.862              & 1.721              & 1.701   \\ 
		$1/64$           & $1/16$  & 9.633e-03          & 1.219e-02          & 1.321e-02          & 1.810              & 1.852              & 1.776   \\ 
		$1/128$          & $1/32$  & 2.899e-03          & 3.743e-03          & 3.981e-03          & 1.733              & 1.703              & 1.730    \\
		\hline\hline
	\end{tabular}%
	\caption{Absolute errors and order of convergence at $t=1$ for Example \ref{ex:2}.}	
	\label{table:ex2} 
\end{table}

\begin{example}\label{ex:2}
	As a second example we consider the time fractional advection-dispersion equation
	\begin{linenomath*}
		\begin{equation*}
		u^{(\alpha)}_t(x,y,t) + \frac{1}{1+x}u_y(x,y,t) +  \frac{1}{1+y}u_y(x,y,t) = \varepsilon\Delta u(x,y,t)  + f(x,y,t), \qquad \varepsilon>0,
		\end{equation*}
	\end{linenomath*}
	on $\Omega = (0,1)\times (0,1)$ for $0\leq t\leq 1$,
	with initial condition
	\begin{linenomath*}
		\begin{equation*}
		u(x,y,0) = \sin\pi x \sin\pi y, \qquad  (x,y)\in\Omega,
		\end{equation*}
	\end{linenomath*}
	boundary condition %$u(x,y,t) = 0$ for $(x,y)\in\partial\Omega\times (0,1]$, and
	\begin{linenomath*}
		\begin{equation*}
		u(x,y,t) = 0, \qquad (x,y)\in\partial\Omega\times (0,1],
		\end{equation*}
	\end{linenomath*}
	and source or sink term
	\begin{linenomath*}
		\begin{equation*}
		f(x,y,t) =  \left( \dfrac{2t^{2-\alpha}}{\Gamma(3-\alpha)} + 2\varepsilon\pi^2(t^2+1) \right) \sin\pi x\sin\pi y 
		+ \pi(t^2+1)\left( \frac{\cos\pi x\sin\pi y}{x+1} + \frac{\cos\pi \sin\pi y}{y+1} \right).
		\end{equation*}
	\end{linenomath*}
	The exact solution is
	\begin{linenomath*}
		\begin{equation*}
		u(x,y,t) = (t^2+1)\sin\pi x \sin\pi y.
		\end{equation*}
	\end{linenomath*}
	% which may be verified by direct substitution in the fractional differential equation.
	
	This is a test for the behavior of the method in the presence of very small diffusion coefficients, an almost 
	degenerate parabolic equation. Numerical results are provided in Table \ref{table:ex2}. As before, 
	three different fractional derivative orders are taken into account and there are results for three different 
	diffusion coefficients: $\varepsilon=10^{-1}$, $\varepsilon=10^{-3}$ and $\varepsilon=10^{-5}$.
	
\end{example}

%\begin{figure}[t]
%		\includegraphics[width=\textwidth]{images/example2a}
%		\caption{$\alpha = 0.3$}  
%		\label{f1} 
%\end{figure}     
%	\begin{figure}[t]
%		\includegraphics[width=\textwidth]{images/example2b}
%		\caption{$\alpha = 0.5$}  
%		\label{f2}      
%	\end{figure}
%	\begin{figure}[t]
%		\includegraphics[width=\textwidth]{images/example2c}
%		\caption{$\alpha = 0.7$}    
%		\label{f3}    
%	\end{figure}
%	\begin{figure}[t]
%		\includegraphics[width=\textwidth]{images/example2d}
%		\caption{$\alpha = 0.9$}     
%		\label{f4}   
%	\end{figure}
%%	\caption{Numerical solutions of Example \ref{ex:3} for $t=1$}\label{fig:ex3}
%%\end{figure}

\begin{figure}[t]
	\centering
	\begin{subfigure}[b]{0.45\textwidth}
		\includegraphics[width=\textwidth]{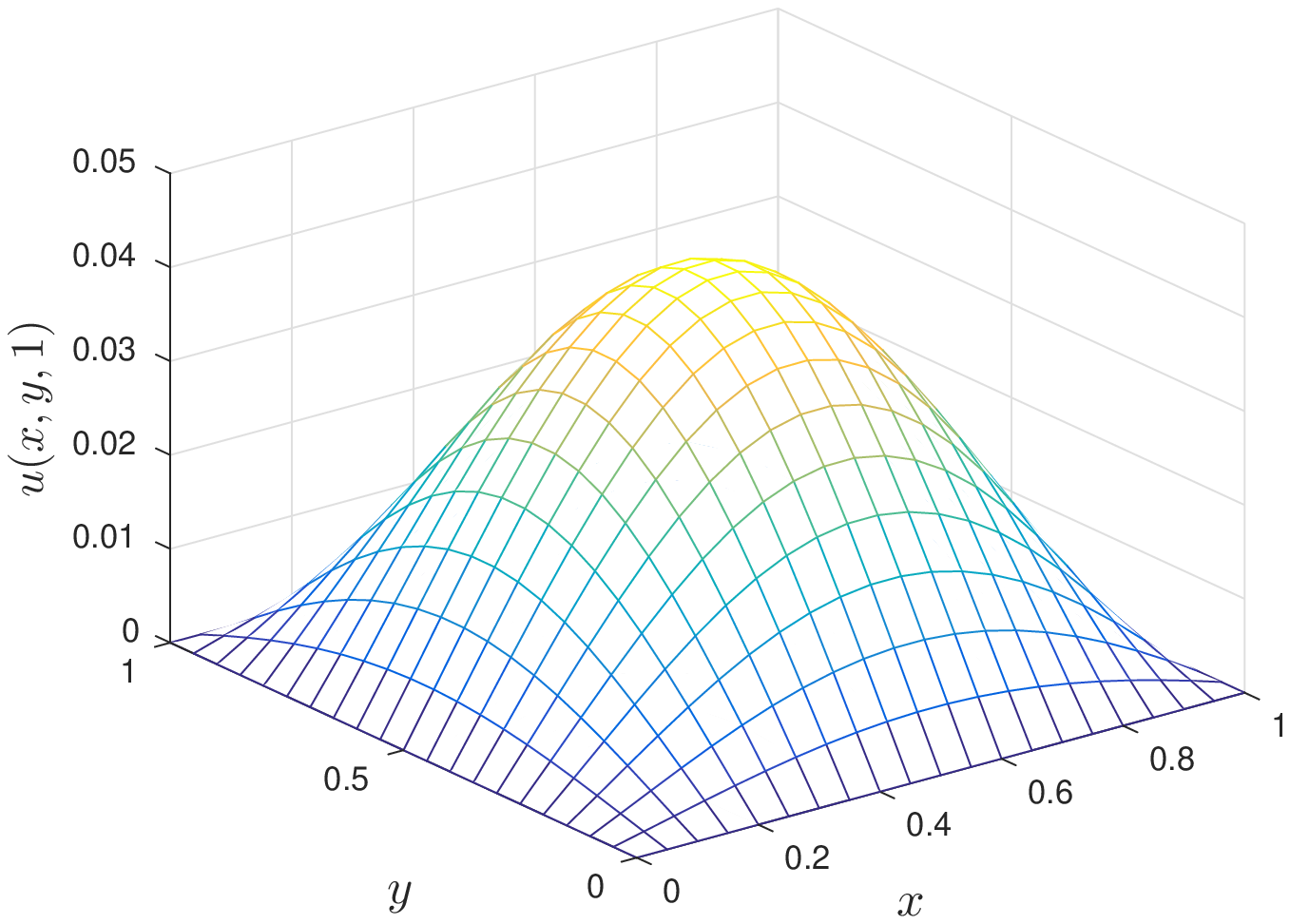}
		\caption{$\alpha = 0.3$}        
	\end{subfigure}
	\hfill
	\begin{subfigure}[b]{0.45\textwidth}
		\includegraphics[width=\textwidth]{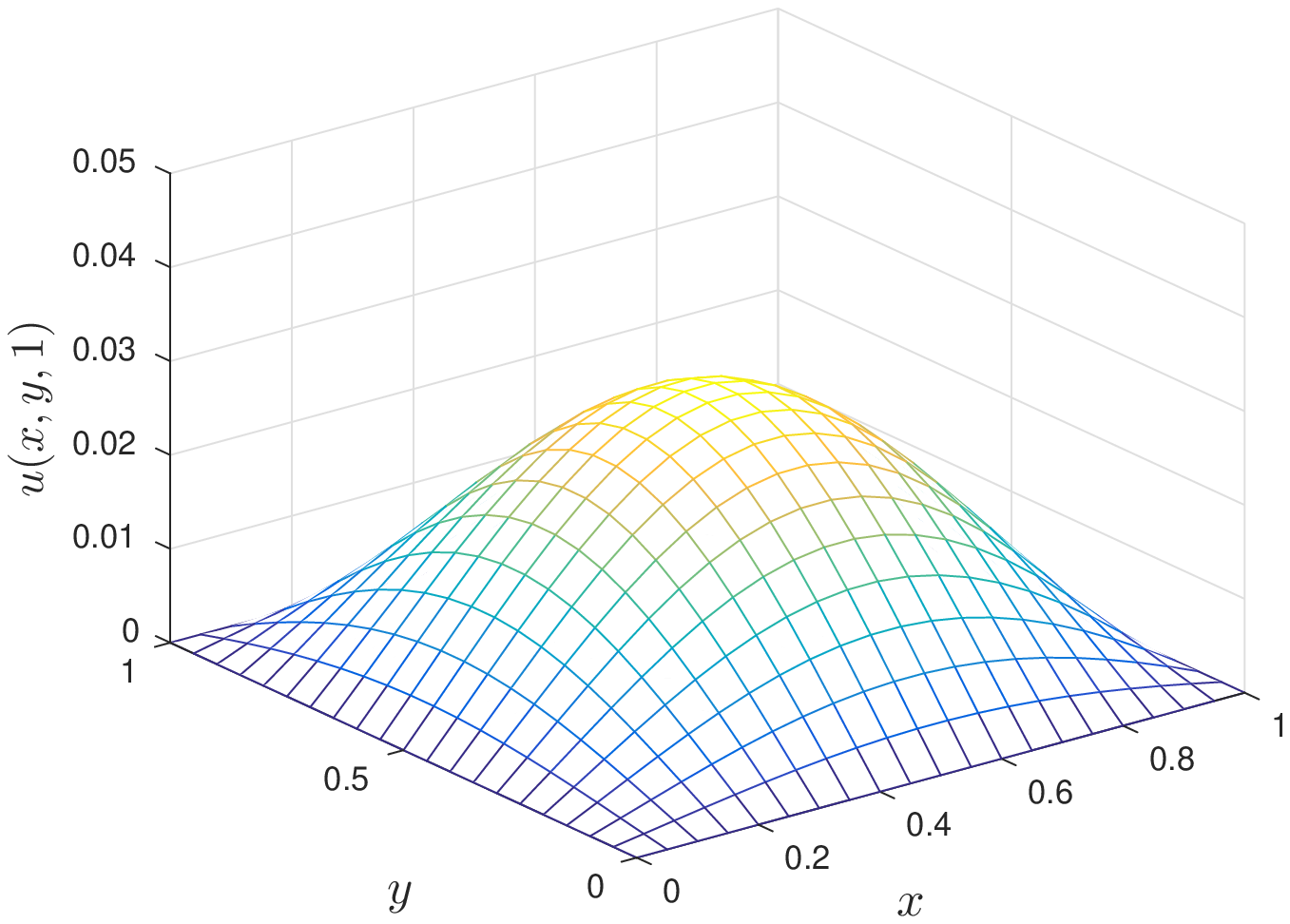}
		\caption{$\alpha = 0.5$}        
	\end{subfigure}
	\\[4mm]
	\begin{subfigure}[b]{0.45\textwidth}
		\includegraphics[width=\textwidth]{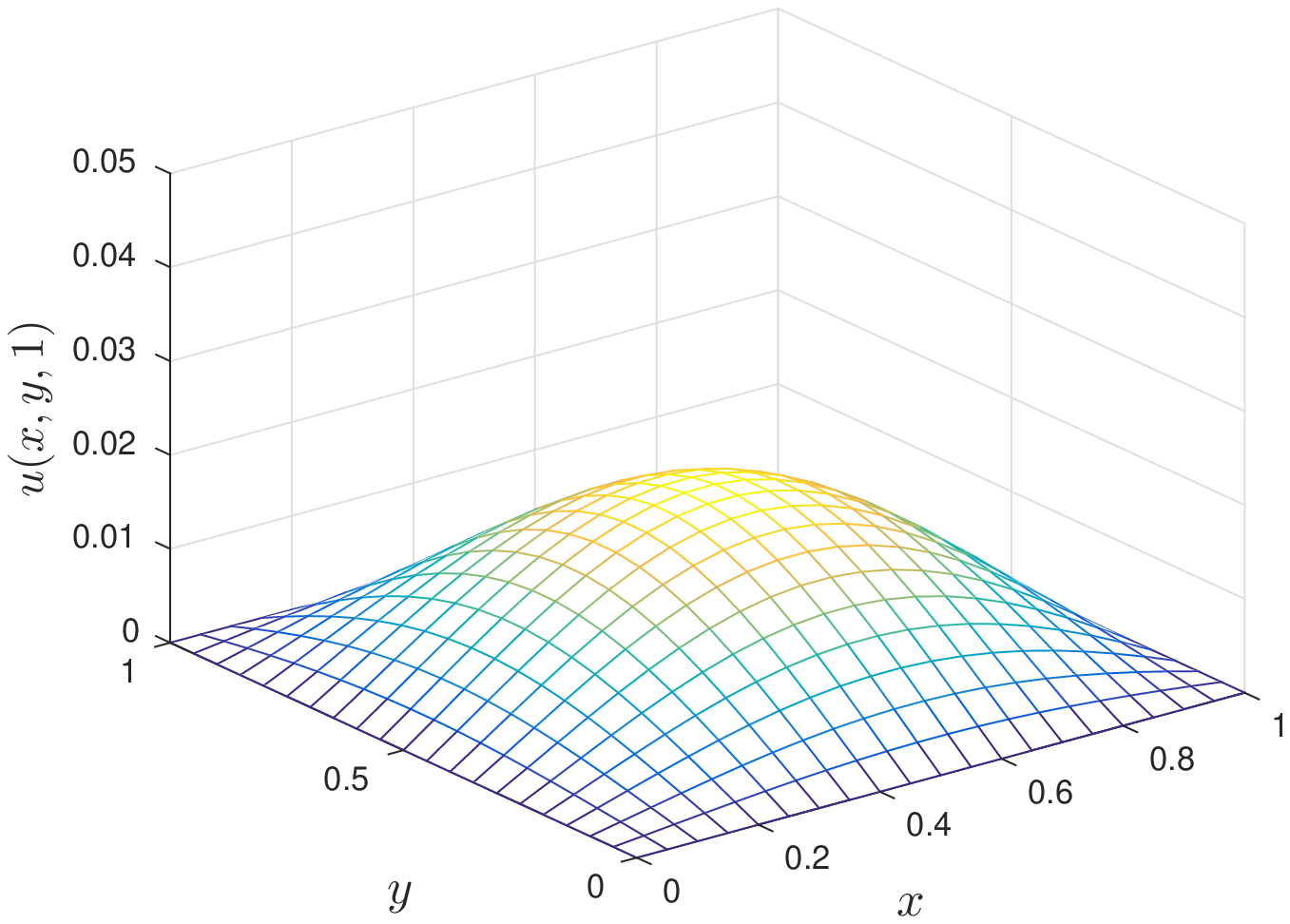}
		\caption{$\alpha = 0.7$}        
	\end{subfigure}
	\hfill
	\begin{subfigure}[b]{0.45\textwidth}
		\includegraphics[width=\textwidth]{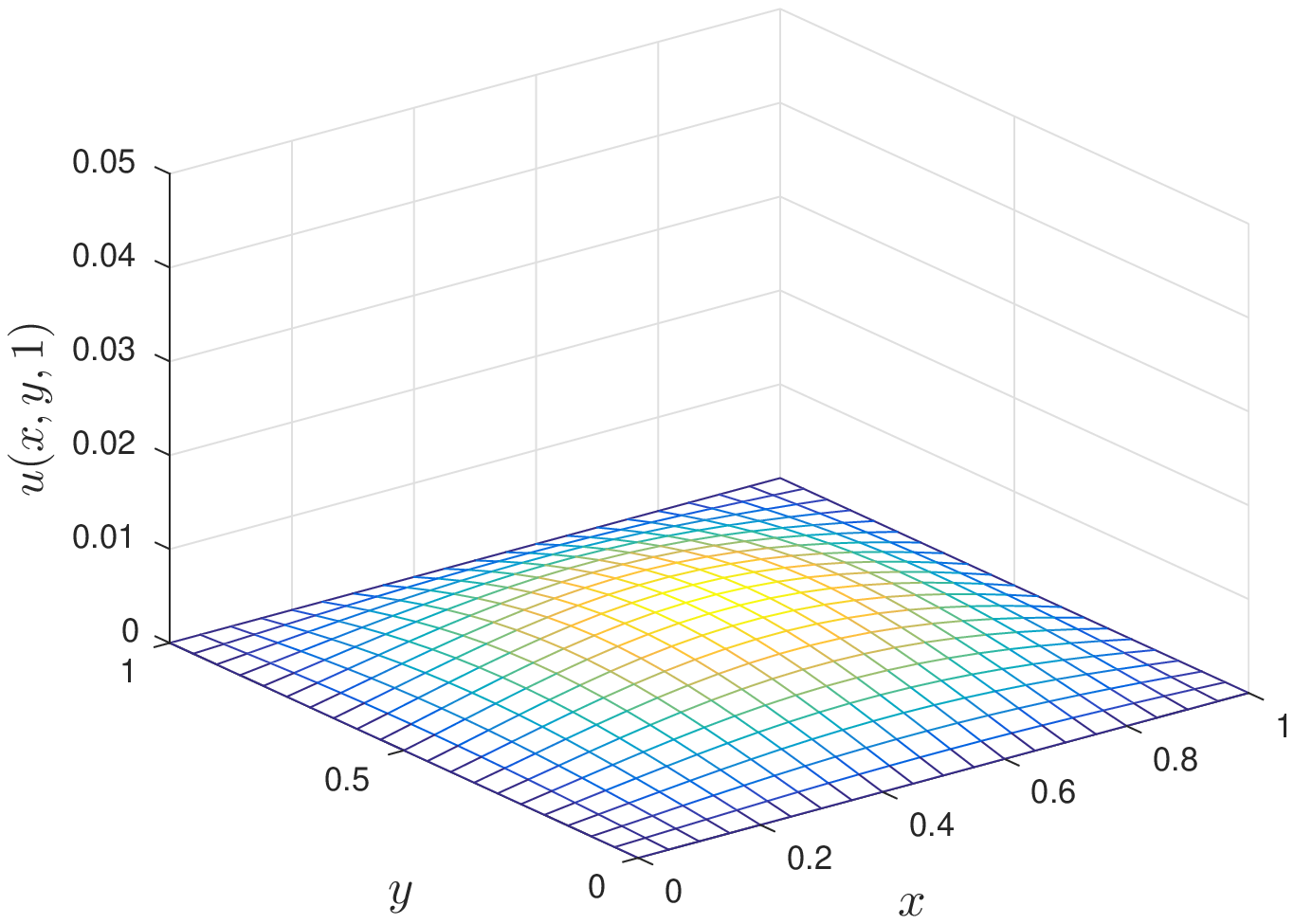}
		\caption{$\alpha = 0.9$}        
	\end{subfigure}
	\caption{Numerical solutions of Example \ref{ex:3} for $t=1$}\label{fig:ex3}
\end{figure}

\begin{example}\label{ex:3}
	Finally we solve the time fractional diffusion equation
	\begin{linenomath*}
		\begin{equation*}
		u_t^{(\alpha)} (x,y,t) + u_x(x,y,t) + u_y(x,y,t) = \Delta u(x,y,t), \\%[2mm] 
		\end{equation*}
	\end{linenomath*}
	on the finite square domain $\Omega = (0,1)\times (0,1)$ for $0\leq t\leq 1$,
	with the initial condition
	\begin{linenomath*}
		\begin{equation*}
		u(x,y,0) = \sin\pi x \sin\pi y, \qquad \mbox{for } (x,y)\in\Omega,
		\end{equation*}
	\end{linenomath*}
	% and the zero Dirichlet boundary condition $u(x,y,t) = 0$ for $(x,y)\in\partial\Omega\times (0,1]$.
	and the boundary condition
	\begin{linenomath*}
		\begin{equation*}
		u(x,y,t) = 0, \qquad  (x,y)\in\partial\Omega\times (0,1].
		\end{equation*}
	\end{linenomath*}
	
	Figure \ref{fig:ex3} illustrates the computed solutions for $t=1$ for several values of $\alpha$. No exact 
	solutions are known for this problem but the pictures illustrate the continuous dependence 
	of the solutions on the fractional differentiation order.
	
\end{example}

In summary, this paper introduces an implicit finite difference approximation for the solution of an initial 
boundary value problem for a two dimensional time fractional advection-dispersion equation 
with variable coefficients in which the fractional derivative is given in the sense of Caputo and the dispersion terms are in nondivergence form. Proofs of consistency, stability and 
convergence are included and so are illustrative numerical experiments. A useful feature of the paper is the  computational framework based on matrices. Our scheme was successfully implemented for the solution of an inverse source problem in \cite{EcheverryMejia2018}. We certainly expect to develop other applications of this scheme in the near future.

% % \clearpage

%
%\section{Final remarks}\label{5}
%
%In this paper we introduce a new implicit finite difference approximation for the solution of an initial 
%boundary value problem for a two dimensional time fractional advection-dispersion equation 
%with variable coefficients. Proofs of consistency, stability and 
%convergence are included and so are illustrative numerical experiments. Extensions of this method to 
%different equations or to support the solution of inverse problems are the subject of current research. 
%
\section*{Acknowledgments}
The authors would like to acknowledge financial support by Universidad Nacional de Colombia through the 
research project with Hermes code 33154.

%\section{References}

\bibliographystyle{plain}
\bibliography{biblio-nov2018}

\end{document}